\documentclass[11pt,a4paper,reqno]{article}
\usepackage{amsmath}
\usepackage{amsthm}
\usepackage{enumerate}
\usepackage{amssymb}

\linespread{1.05}

\usepackage{verbatim}
\usepackage{url}
\usepackage[latin1]{inputenc}

\usepackage{caption}
\usepackage{graphicx,color}
\setlength{\captionmargin}{8mm}
\usepackage{tikz}

\setlength{\textwidth}{150mm}
\setlength{\textheight}{215mm}
\setlength{\oddsidemargin}{15pt}



\newcommand\ett{\mathbf1}

\def\N{{\mathbb N}}
\def\Z{{\mathbb Z}}

\def\R{{\mathbb R}}

\def\Pr{{\mathbb P}}
\def\Ex{{\mathbb E}}

\def\Ac{{\mathcal A}}
\def\Bc{{\mathcal B}}
\def\Cc{{\mathcal C}}
\def\Dc{{\mathcal D}}

\def\Fc{{\mathcal F}}

\def\Uc{{\mathcal U}}

\def\hZ{{\hat{Z}_\x}}
\def\hW{{\hat{W}}}


\def\x{\mathbf{x}}

\def\0{\mathbf{0}}


\newcommand{\vertiii}[1]{{\left\vert\kern-0.25ex\left\vert\kern-0.25ex\left\vert #1  \right\vert\kern-0.25ex\right\vert\kern-0.25ex\right\vert}}

\def\be{\begin{equation}}
\def\ee{\end{equation}}
\def\bea{\begin{equation*}}
\def\eea{\end{equation*}}
\def\bal{\begin{aligned}}
\def\eal{\end{aligned}}


\DeclareMathOperator{\ind}{{\bf 1}}

\def\le{\leqslant}

\def\ge{\geqslant}

\newtheorem{thm}{Theorem}[section]
\newtheorem{lma}[thm]{Lemma}
\newtheorem{cor}[thm]{Corollary}
\newtheorem{prop}[thm]{Proposition}

\newtheorem{claim}[thm]{Claim}
\newtheorem{conj}[thm]{Conjecture}
\newtheorem{prob}[thm]{Problem}

\theoremstyle{remark}
\newtheorem*{remark}{Remark}
\newtheorem{preex}[thm]{Example}

\theoremstyle{definition}


\begin{document}

\title{Competition in growth and urns\thanks{This work was in part supported
	by grant 637-2013-7302 from the Swedish Research Council (DA), the Knut
	and Alice Wallenberg Foundation (SJ) and by CNPq (Proc.~303275/2013-8), FAPERJ (Proc.~201.598/2014), and ERC Starting Grant 680275 MALIG (RM). Part of the work was done during visits of SJ and RM to the Isaac Newton Institute for Mathematical Sciences, 
EPSCR Grant Number EP/K032208/1.
}}
\date{\today}
\author{Daniel Ahlberg, Simon Griffiths, Svante Janson, and Robert Morris}

\maketitle

\begin{abstract}
We study survival among two competing types in two settings: a planar growth model related to two-neighbour bootstrap percolation, and a system of urns with graph-based interactions. In the planar growth model, uncoloured sites are given a colour at rate $0$, $1$ or $\infty$, depending on whether they have zero, one, or at least two neighbours of that colour. In the urn scheme, each vertex of a graph $G$ has an associated urn containing some number of either blue or red balls (but not both). At each time step, a ball is chosen uniformly at random from all those currently present in the system, a ball of the same colour is added to each neighbouring urn, and balls in the same urn but of different colours annihilate on a one-for-one basis. We show that, for every connected graph $G$ and every initial configuration, only one colour survives almost surely. As a corollary, we deduce that in the two-type growth model on $\Z^2$, one of the colours only infects a finite number of sites with probability one. We also discuss generalisations to higher dimensions and multi-type processes, and list a number of open problems and conjectures.
\end{abstract}

\section{Introduction}

A model of competition for space between two or more growing entities was introduced in the context of first-passage percolation on $\Z^2$ by H\"aggstr\"om and Pemantle~\cite{hagpem98}. Beyond the mere beauty of the problem, the authors were motivated by the closely-related problem of understanding the geodesic structure in the random metric on $\Z^2$ induced by the first-passage percolation configuration. More precisely, they realized that positive probability of unbounded growth of two different types implies a lower bound on the number of semi-infinite geodesics originating from a given point, making the first rigorous progress on a question raised by Newman~\cite{newman95}. H\"aggstr\"om and Pemantle showed that if two growing entities on $\Z^2$ infect unoccupied neighbours at rate 1, which corresponds to first-passage percolation with exponential weights, with positive probability both grow indefinitely. 

Since the original work of H\"aggstr\"om and Pemantle~\cite{hagpem98}, competing growth has attracted considerable attention in a variety of settings. In later work H\"aggstr\"om and Pemantle~\cite{hagpem00} considered competition between two growing entities on $\Z^2$ with different growth rates. Garet and Marchand~\cite{garmar05} and Hoffman~\cite{hoffman05} have considered the analogous problem in higher dimensions for first-passage percolation with general edge weights. Unbounded initial configurations were examined by Deijfen and H\"aggstr\"om~\cite{deihag07}, while competition with more that two growing entities was studied by Hoffman~\cite{hoffman08} and Damron and Hochman~\cite{damhoc13}. See the excellent survey by Deijfen and H\"aggstr\"om~\cite{deihag08} for a general introduction to the area.

A striking feature of these lattice models is that (in all known cases)
coexistence occurs in the case of equal strength competitors. Moreover, on a
finite torus both colours occupy (with high probability) a positive
proportion of the vertices, even when the growth is highly biased,  
 see Antunovi\'c, Dekel, Mossel and Peres~\cite{antdekmosper}. In the setting of random graphs the outcome is sometimes different: there are settings in which one of the competing entities on a random graph ends up with a $1-o(1)$ proportion of the vertices with high probability, see for example Deijfen and van der Hofstad~\cite{deihof16} and Antunovi\'c, Dekel, Mossel and Peres~\cite{antdekmosper}. On the other hand,  Antunovi\'c, Mossel and Racz~\cite{antmosrac} showed that a certain model of preferential attachment gives rise to random graphs in which both entities end up with a positive proportion of the vertices. 

In this paper we will consider the problem of coexistence in a growth model on $\Z^2$, partly motivated by a process studied by Kesten and Schonmann~\cite{kessch95}, in which vertices of the same colour cooperate, and infect any common neighbour instantly. We will show that in this model, for any finite initial configuration with two colours, there is only one surviving type. In order to prove this theorem, we will naturally be led to study a model of competition in a system of urns that interact via some graph $G$. We will show that, for any finite system of urns, and for any initial configuration with two colours, only one type survives. We will also discuss the growth model in higher dimensions and the multicoloured setting. In a follow-up paper~\cite{ahlgrijanmor2} we shall study infinite systems of urns. In each of these cases, there are a number of natural open problems and conjectures.

\subsection{Competition in growth}\label{sec:growth}

Motivated by the Glauber dynamics of the Ising model at low temperatures, Kesten and Schonmann~\cite{kessch95} introduced the following process on $\Z^d$. Let $A \subset \Z^d$ be a set of initially `infected' sites, and let healthy sites become infected at rate $0$, $1$ or $\lambda$, depending on whether they have zero, one, or at least two already-infected neighbours. (Infected sites stay infected forever, and $\lambda \ge 1$ is a parameter of the model.) In~\cite{kessch95}, the authors determined the rate of growth (up to a constant factor) and the asymptotic shape of the infected droplet in the limit $\lambda \to \infty$. We remark that the case $\lambda = 1$ corresponds to a close relative of first-passage percolation, in which sites instead of bonds are assigned random weights, known as the Eden model~\cite{eden61}. A more complicated `nucleation and growth' model, in which sites can become infected despite having no infected neighbours, was studied in~\cite{dehsch97a,cerman13a,bolgrimorrolsmi}, and applied to the Ising model in~\cite{dehsch97b,cerman13b}. 

We shall be interested in a two-colour competitive variant of the Kesten--Schonmann process on $\Z^2$ (see Sections~\ref{sec:multicolours} and~\ref{sec:dimensions} for a discussion of the problem with more colours and/or in higher dimensions). Let $R$ and $B$ be (disjoint) sets of red and blue (respectively) vertices of $\Z^2$ at time $t = 0$, and let not-yet-coloured sites be coloured red (resp. blue) at rate $0$, $1$ or $\infty$, depending on whether they have zero, one, or at least two red (resp. blue) neighbours.\footnote{So that the model is well-defined, we will assume that no site in $\Z^2 \setminus (R \cup B)$ has more than one neighbour in either set. Note that for any such pair $(R,B)$, almost surely no vertex will be coloured with more than one colour.} We will refer to this as the \emph{two-type growth model on $\Z^2$}. 

We will prove the following theorem, which states that coexistence does \emph{not} occur in this model in two dimensions. Let us say that an initial configuration is `finite' if only a finite number of sites are initially coloured, and that a colour `survives' if an infinite number of sites are eventually given that colour.

\begin{thm}\label{thm:growth}
For any finite initial configuration, the two-type growth model on $\Z^2$ almost surely has only one surviving colour. 
\end{thm}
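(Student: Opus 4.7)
The plan is to deduce the theorem from the main urn-model theorem of this paper, which states that in any finite connected urn system only one colour survives almost surely. What is needed, then, is a faithful encoding of the growth dynamics on $\Z^2$ as such an urn system.

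First I would observe that at every time $t \ge 0$ the red set $R_t$ and the blue set $B_t$ are each $2$-neighbour closed in $\Z^2$: any uncoloured site with two same-coloured neighbours is coloured instantaneously, so no such site persists. Starting from a finite configuration, one therefore reaches in finite time a state in which each colour consists of a finite union of $2$-neighbour closed ``droplets''. For droplets of roughly rectangular shape, subsequent growth proceeds by the addition of whole rows or columns: a single rate-$1$ nucleation on an edge of length $\ell$ triggers, via the $2$-neighbour rule, the instantaneous completion of that entire row, so rows and columns appear at rates equal to their respective edge lengths.

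The central step is the urn encoding. I would associate to each ``growth-active'' side of a droplet a vertex of a graph $G$, coloured by its droplet's colour and carrying a number of balls equal to the side's length. Adjacencies in $G$ are placed between sides that face each other across an empty corridor, so that a row-addition by one droplet corresponds, in the urn process, to drawing a ball and depositing a same-colour ball into the opposing urn (annihilating one of the opposite-coloured balls there, which records the shrinking of the facing side). In this way the sequence of rate-$1$ events in the growth model matches in distribution the ball-selection sequence of the urn dynamics on $G$, and applying the urn theorem to $G$ forces one colour to die out, which translates into only one colour surviving in $\Z^2$.

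The main obstacle is that the urn theorem requires a fixed graph $G$, whereas the droplet structure of the growth model is not static: droplets may elongate, merge, or expose new faces to one another as the process evolves, and this can change the topology of growth-active sides. The technical heart of the argument is to handle these geometric transitions without breaking the urn correspondence, either by arguing that only finitely many such topological changes occur so that they can be dealt with one by one, or by constructing an auxiliary graph large enough to accommodate every configuration reachable from the initial data. Organising this bookkeeping cleanly is where most of the effort lies.
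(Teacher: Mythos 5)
Your high-level plan — reduce to the urn theorem by encoding the geometry of the growing set as a finite urn system — is indeed the paper's strategy. But the specific encoding you propose is wrong, and the obstacle you flag at the end is exactly what the correct encoding dissolves.

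The flaw is in the adjacency rule. You place urn edges between sides that ``face each other across an empty corridor'' and claim that a row-addition on one side corresponds to depositing a ball into the \emph{opposing} urn. That is not what happens geometrically: when a side of the growing region shifts outward by one unit, the side directly across from it is unaffected until contact; what changes are the two \emph{perpendicular} sides at its ends, each of which lengthens or shortens by one unit depending on whether it carries the same or the opposite colour. This is precisely the neighbour-update rule of the two-type urn process, but on a \emph{cycle} whose vertices are the consecutive sides of a single closed boundary curve — not a bipartite ``opposing faces'' graph of separate droplets. With your adjacency structure the urn dynamics do not match the growth dynamics, so the reduction does not go through.

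The correct encoding also removes the ``main obstacle'' you describe. Rather than tracking several droplets with changing topology, wait until the almost surely finite time $\tau_0$ at which the union of unit squares over the coloured sites becomes connected. From then on, the \emph{outer} boundary is a single closed curve consisting of $2k$ alternating monochromatic intervals, and a short counting argument shows this decomposes into exactly $4k+4$ (possibly empty) axis-parallel segments, yielding an urn process on $C_{4k+4}$. Crucially, $k$ is non-increasing in time, so one fixes $k$, assumes for contradiction that the boundary retains $2k$ intervals forever, and applies the urn theorem on the fixed cycle $C_{4k+4}$ to derive a contradiction. No auxiliary graph ``large enough for every reachable configuration'' is needed, and merging of droplets is handled once and for all by waiting for connectivity. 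As it stands, your proposal has a gap both in the encoding and in the treatment of topological change.
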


Note that moreover, for any finite initial configuration in which one colour does not already `block' the other, each colour has a positive probability of being the surviving species. We believe that Theorem~\ref{thm:growth} can be generalised to higher dimensions and to more than two types (see Sections~\ref{sec:multicolours} and~\ref{sec:dimensions}), but we do not know how to prove either statement. We also expect there to be only one surviving colour in the natural `biased' version of the two-type growth model, in which the colours have different rates (so a vertex with one neighbour of colour $j$ is coloured $j$ at rate $\lambda_j$), but (perhaps surprisingly) our methods also do not seem to easily extend to this setting. On the other hand, we conjecture that for any finite $\lambda \ge 1$, if two-neighbour infections occur at rate $\lambda$ instead of $\infty$, then for any initial configuration with at least two non-blocked colours, with positive probability more than one colour survives.

\subsection{Competition in urns}

We will deduce Theorem~\ref{thm:growth} from a corresponding statement about competition in urns on a cycle. The deduction is very simple, so let us begin by outlining it (a detailed proof is given in Section~\ref{sec:proof}). We will associate each set of (coloured) vertices $X \subset \Z^2$ with a (coloured) subset of $\R^2$, by placing on each vertex $x \in X$ a unit square of the same colour as $x$. Now, given any finite initial configuration, after some time $t$ the subset of~$\R^2$ associated with the coloured vertices will form a (solid) connected component $S \subset \R^2$. Let $\partial S$ denote the boundary of $S$, and suppose that $\partial S$ consists of $k$ intervals (that is, connected subsets of $\partial S$) of each colour. Note that, since two-neighbour infections are instantaneous, $\partial S$ consists of $4k + 4$ monochromatic (horizontal or vertical) line segments (some of which may be empty), see Figure~\ref{fig:evolution}. We associate an urn with each of these $4k + 4$ lines (which we will refer to as the `sides' of $S$), and define a (cyclic) graph on the urns by adding an edge between the urns associated with neighbouring sides. We place $\ell$ red (resp. blue) balls in an urn if the corresponding side is red (resp. blue) and has 
length $\ell$.

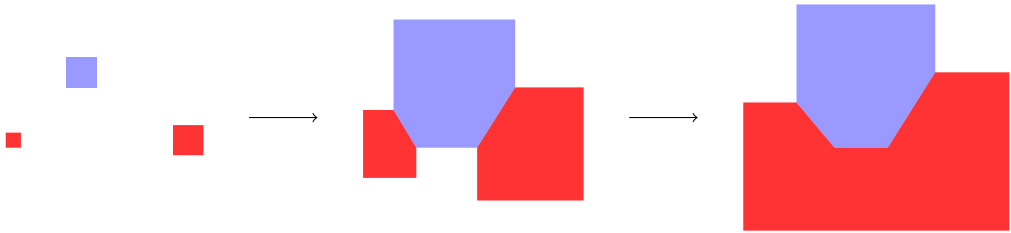
\begin{figure}[htbp]
\begin{center}
\begin{tikzpicture}[scale=.4]
\fill[fill=red, opacity=.8]
	(-.25,0) rectangle (.25,.5)
	(5.25,-.25) rectangle (6.25,.75);
\fill[fill=blue, opacity=.4]
	(1.75,2) rectangle (2.75,3);
\fill[fill=red, opacity=.8]
	(11.5,-1) -- (13.25,-1) -- (13.25,0) -- (12.5,1.25) -- (11.5,1.25) -- cycle
	(15.25,-1.75) -- (15.25,0) -- (16.5,2) -- (18.75,2) -- (18.75,-1.75) -- cycle;
\fill[fill=blue, opacity=.4]
	(12.5,1.25) -- (12.5,4.25) -- (16.5,4.25) -- (16.5,2) -- (15.25,0) -- (13.25,0) -- cycle;
\fill[fill=red, opacity=.8]
	(24,-2.75) -- (24,1.5) -- (25.75,1.5) -- (27,0) -- (28.75,0) -- (30.3125,2.5) -- (32.75,2.5) -- (32.75,-2.75) --cycle;
\fill[fill=blue, opacity=.4]
	(25.75,1.5) -- (25.75,4.75) -- (30.3125,4.75) -- (30.3125,2.5) -- (28.75,0) -- (27,0) -- cycle;
\draw [->] (7.75,1) -- (10,1);
\draw [->] (20.25,1) -- (22.5,1);
\end{tikzpicture}
\end{center}
\caption{The initial growing phase. In the second figure the coloured sites have formed a connected component whose perimeter consists of $k$ red
intervals and $k$ blue intervals, where $k = 2$, 
divided into a total of 12 vertical and horizontal line segments. In the third figure $k=1$ and there are 8 line segments.}
\label{fig:evolution}
\end{figure}

Consider the next site to be coloured after time $t$; it is the endpoint (not in $S$) of a uniformly chosen element of the edge-boundary of the set of coloured vertices. The effect of this vertex being coloured is that the corresponding side of $S$ shifts (instantly) sideways by distance one, and thus causes the lengths of the neighbouring sides to increase or decrease by one, depending on their colour. The corresponding process on the urns is as follows: at each (discrete) time step, we choose a ball uniformly at random (from the set of all balls in the urns), and add a ball of the same colour to each of the neighbouring urns; if there are balls of a different colour in some of these urns then the new ball immediately annihilates with one of them. In order to show that only one colour survives in the two-type growth process, it will therefore suffice to prove that only one colour survives in the urn process on a cycle. 

We will in fact prove the following vast generalisation of this claim. Given a collection $V$ of urns, and a graph $G$ with vertex set $V$, let us call the process described above the \emph{two-type urn process} (or \emph{two-type urn scheme}) on $G$. Here a colour is said to `survive' if it is present in the system at arbitrarily large times.

\begin{thm}\label{thm:urns}
For every finite, connected graph $G$, and any finite initial configuration, the two-type urn process on $G$ almost surely has only one surviving colour. 
\end{thm}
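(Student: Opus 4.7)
The plan is to encode the two colours via signed integers $X_v\in\Z$, with $X_v>0$ meaning ``$X_v$ red balls at $v$'' and $X_v<0$ meaning ``$|X_v|$ blue balls at $v$'', so that annihilation becomes ordinary integer addition. In these coordinates the transition rule reads: select $v^*\in V$ with probability $|X_{v^*}|/T_t$, where $T_t:=\sum_w|X_w(t)|$, and then update $X_u\mapsto X_u+\sgn(X_{v^*})$ for every neighbour $u$ of $v^*$. A direct calculation gives the conditional drift
\[
\E\bigl[X_{t+1}-X_t \,\bigm|\, \mathcal F_t\bigr] \;=\; \frac{A\,X_t}{T_t},
\]
where $A$ is the adjacency matrix of $G$. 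Under the time change $\d\tau=\d t/T_t$ the drift becomes the linear map $X\mapsto AX$, so the heuristic is that $X_t$ should asymptotically line up with the Perron--Frobenius eigenvector $f$ of $A$; since $G$ is connected and finite, $f$ has strictly positive entries, and such alignment forces every $X_v$ eventually to share a common sign, i.e., only one colour survives.

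The first concrete step is to build an eigenvector martingale. Setting $\Phi_t:=f^\top X_t$, the drift identity gives $\E[\Phi_{t+1}-\Phi_t\mid\mathcal F_t]=\lambda_1\Phi_t/T_t$ with $\lambda_1>0$, so the compensated process
\[
N_t \;:=\; \Phi_t \,\Big/ \prod_{s<t}\Bigl(1+\frac{\lambda_1}{T_s}\Bigr)
\]
is a martingale with bounded increments, and therefore converges almost surely. Running analogous constructions on the remaining eigenvectors of $A$ yields auxiliary processes whose amplification factors are strictly smaller than that of $f$. Comparing growth rates, the goal is then to deduce almost-sure convergence of the normalised state $X_t/\|X_t\|_1$ to $\pm f/\|f\|_1$.

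Once directional convergence is in hand, the conclusion splits into two cases. On the event $T_t\to\infty$, the strict positivity of the limit direction forces each coordinate $X_v(t)$ eventually to be an integer of the common sign, which says exactly that one colour has vanished; on the complementary event, $T_t$ remains bounded and the chain lives in a finite subset of $\Z^V$, where a standard Markov-chain argument---showing that from any mixed state a monochromatic configuration is reached with positive probability in a bounded number of steps---completes the proof via Borel--Cantelli, using that monochromatic configurations are absorbing for the ``which colours are present'' partition.

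The principal obstacle is that the time change depends on the random, non-monotone quantity $T_t$: annihilation can make $T_t$ shrink, so Polya-urn-on-a-graph theorems do not apply off the shelf. This forces a preliminary Lyapunov estimate ruling out $T_t$ staying bounded indefinitely in the presence of both colours, together with a quantitative lower bound on $\prod_s(1+\lambda_1/T_s)$. The most delicate step, however, is converting the spectral gap of $A$ into genuine almost-sure alignment of $X_t$ with $f$ by quadratic-variation control of the eigenvector martingales; this is where I expect the bulk of the technical work to lie.
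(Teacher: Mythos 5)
Your outline takes a genuinely different route from the paper. The paper embeds the urn process into \emph{continuous} time, identifies the monochromatic process as a multi-type Markov branching process (Athreya), and handles annihilation via a ``conservative'' trick: red and blue balls merge into purple balls rather than vanishing, so that $Z_\x(t)=R_\x(t)-B_\x(t)$ where $R+P$ and $B+P$ are honest monochromatic branching processes. Your proposal stays in discrete time, encodes the signed state $X_t$, computes the drift $AX_t/T_t$, and builds the eigenvector-compensated martingale $N_t=\Phi_t/\prod_{s<t}(1+\lambda_1/T_s)$ in the style of stochastic approximation/P\'olya urns. That is a legitimate alternative framework, and the drift computation is correct.

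However, there is a serious gap at exactly the point the paper identifies as the hard part. Almost-sure convergence of the martingale $N_t$ to some limit $N_\infty$ does \emph{not} by itself give directional convergence of $X_t/\|X_t\|_1$ to $\pm f/\|f\|_1$; for that you need $N_\infty\neq 0$ almost surely. Your dichotomy ``$T_t\to\infty$ versus $T_t$ bounded'' does not resolve this, because $T_t=\|X_t\|_1\to\infty$ does \emph{not} imply $\Phi_t=f^\top X_t\to\infty$: the signed vector can have large $\ell^1$-norm but substantial cancellation against $f$, which is exactly the $N_\infty=0$ (equivalently $W=0$) scenario. In that case $X_t$ need not align with $\pm f$, and the subdominant eigendirections (several of which have $\lambda_k<0$ and for which $\prod(1+\lambda_k/T_s)$ need not even grow) cannot be ruled out by a naive comparison of ``amplification factors.'' The paper devotes the bulk of Section~\ref{sec:competing:urns} to precisely this: proving $\Pr(W=0)=0$ via a marking construction (mark the offspring of the first nucleation, write $Z=Y-M$ with $M$ a genuinely monochromatic branching process whose limit is a.s.\ nonzero and $Y\stackrel{d}{=}$ a delayed copy of $Z$) to get $\Pr(W=0)\le 1/2$, and then upgrades to $\Pr(W=0)=0$ using the Markov property and L\'evy's $0$--$1$ law. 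Nothing in your sketch plays the role of this non-degeneracy argument, and without it the theorem does not follow; you would need to supply an analogue, e.g.\ a conditional restart argument showing $\Pr(N_\infty=0)$ is bounded away from $1$ uniformly over initial states, combined with L\'evy's $0$--$1$ law.

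A secondary remark: your finite-state fallback in the case ``$T_t$ bounded'' needs more care. If the chain were confined to a finite set of mixed states, then from any such state there is positive probability of a finite sequence of nucleations producing a monochromatic state, after which $T_t$ increases deterministically in expectation and escapes any bound; so the event $\{T_t\text{ bounded forever with both colours present}\}$ is already null by an elementary argument. But this only disposes of the easy case; the genuine difficulty is the cancellation scenario above, which lives on $\{T_t\to\infty\}$.
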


We will show moreover that the number of balls of the surviving colour in the urn on vertex $v$ is almost surely asymptotically proportional to the corresponding entry of the Perron-Frobenius eigenvector of $G$, see Section~\ref{sec:competing:urns}. For background on urn schemes, see for example the surveys of Bena\"im~\cite{benaim99} and Pemantle~\cite{pemantle07}.

\subsection{Competition between many types}\label{sec:multicolours}

For each $s \in \N$, we can define the \emph{$s$-type growth model on $\Z^2$} simply by allowing $s$ different colours. (More precisely, for each colour $i \in [s]$, not-yet-coloured sites are given colour $i$ at rate $0$, $1$ or $\infty$, depending on whether they have zero, one, or at least two neighbours coloured $i$.) We strongly believe that the conclusion of Theorem~\ref{thm:growth} should hold in this more general setting, but we do not see an easy way to prove it using our methods.

\begin{conj}\label{conj:rtype:growth}
For every $s \ge 2$, and every finite initial configuration, the $s$-type growth model on $\Z^2$ almost surely has only one surviving colour. 
\end{conj}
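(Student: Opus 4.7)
The plan is to mirror the strategy used to establish Theorem~\ref{thm:growth}: reduce the $s$-type growth model on $\Z^2$ to an $s$-type urn problem on a cycle, and then prove the urn statement. For the reduction, generalise the geometric construction sketched around Figure~\ref{fig:evolution}. After some finite time the coloured region forms a simply-connected set $S\subset\R^2$ whose boundary decomposes into monochromatic horizontal or vertical line segments, with neighbouring sides of different colours. Associate an urn with each side, containing a number of balls equal to its length, and connect neighbouring sides in a cycle. The Kesten--Schonmann dynamics then translate into the natural $s$-type urn process on this cycle: at each step pick a ball uniformly at random, add a ball of the same colour to each neighbouring urn, with balls of different colours annihilating one-for-one (so that each urn stays monochromatic). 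The only delicate point is that sides can pinch off or merge and so change the cycle length, but this phenomenon is already present in the two-colour case and can be handled in the same way.

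For the second step, state and attempt the following generalisation of Theorem~\ref{thm:urns}: on any finite connected graph $G$ and any finite $s$-colour initial configuration, almost surely only one colour survives. I would try induction on~$s$: if one can show that at least one colour goes extinct in finite time a.s., then from that moment onwards the process is an $(s-1)$-type urn process and the inductive hypothesis applies. To establish extinction of some colour, let $\pi$ be the Perron--Frobenius eigenvector of $G$ and set $M_i(t)=\sum_v \pi_v N_{v,i}(t)$ for each colour $i$, where $N_{v,i}(t)$ counts balls of colour $i$ at vertex $v$ at time $t$. Following the strategy announced in Section~\ref{sec:competing:urns}, a suitable normalisation of $(M_1,\ldots,M_s)$ should satisfy a stochastic-approximation recursion on the probability simplex $\Delta_{s-1}$, and the goal is to show that its only possible limit points are the vertices.

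The main obstacle lies in this last step. For $s=2$ the non-vertex equilibrium is ruled out because the martingale noise is macroscopic enough to repel the process from the unique interior fixed point. For $s\geq 3$ one would want to identify the mean-field ODE on $\Delta_{s-1}$, verify that every non-vertex equilibrium (including those on lower-dimensional faces, where two or more colours would coexist) is a saddle or otherwise unstable, and then invoke a Bena\"im-type nonconvergence theorem to conclude. Annihilation makes the per-step increments nonlinear in the colour mix, and ruling out stable equilibria on the boundary faces of $\Delta_{s-1}$ requires a genuinely multi-species instability analysis that does not seem to reduce to the two-colour argument by any simple pairwise coupling (grouping two colours as ``non-red'' destroys the annihilation between them). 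This is presumably why the authors state the result as a conjecture rather than a theorem.
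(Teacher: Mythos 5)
This statement is Conjecture~\ref{conj:rtype:growth}; the paper does not prove it, it only explains why the two-colour argument fails, exhibits the obstruction (Proposition~\ref{prop:counterexample}), and records a weaker conjecture (Conjecture~\ref{conj:rtype:urns}) that would imply it. So what must be reviewed here is whether your proposed route could plausibly close the gap, and there are two concrete problems with it.

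First, and most seriously, the intermediate urn statement you want to prove --- ``on any finite connected graph $G$ and any finite $s$-colour initial configuration, almost surely only one colour survives'' --- is \emph{false} for $s \ge 3$. Proposition~\ref{prop:counterexample} constructs a graph (the Risk graph $R_s$: $s$ triangles glued at a common vertex) on which all $s$ colours survive with positive probability, and Section~\ref{sec:counter-example} sketches the proof. So your plan is built on a lemma that cannot hold. The reduction from the growth model to cycles is the right idea (and the paper states exactly this, that the cycle case of the urn statement would suffice), but the target must therefore be Conjecture~\ref{conj:rtype:urns}, which concerns only cycles $C_k$, not arbitrary graphs. Any attempted dynamical-systems argument for the urn fixation statement must somehow use the cycle structure; a Perron--Frobenius/stochastic-approximation analysis that only sees the eigenvector $\pi$ and $\lambda$ will not distinguish the cycle from the Risk graph and so cannot work as stated.

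Second, the proposed induction on $s$ (``show some colour goes extinct in finite time, then reduce to $s-1$ colours'') does not reduce the difficulty: showing that some colour a.s.\ dies is essentially the full strength of the claim, and your own discussion acknowledges that the pairwise groupings which make the $s=2$ argument go through (merging several colours into one ``non-red'' class) destroy the annihilation structure between the grouped colours. In the two-colour case the paper does not use a stochastic-approximation instability argument at all; it tracks the scalar difference $Z_\x(t) = R_\x(t) - B_\x(t)$, couples to a conservative system with purple balls and marked balls, obtains $e^{-\lambda t} Z_\x(t) \to W\pi$, and shows $\Pr(W=0)=0$ via a bootstrap with L\'evy's $0$--$1$ law. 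There is no natural ``difference'' for $s \ge 3$, which is precisely the obstacle you correctly identify at the end. In short: the reduction step is sound (restricted to cycles), but the central urn lemma you posit is falsified by the paper's own counterexample, and the rest of the programme is an honest restatement of the open problem rather than a proof.
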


The main reason our proof of Theorem~\ref{thm:growth} does not generalise to larger values of $s$ is that the corresponding statement for urns is false in general. Given a graph $G$, define the \emph{$s$-type urn process on $G$} by allowing balls of $s$ different colours (though each urn must, of course, still be monochromatic). As before, at each step we choose a ball uniformly at random and add a ball of the same colour to each of the neighbouring urns, with annihilation on a one-to-one basis between balls of different colours. 

\begin{prop}\label{prop:counterexample}
For every $s \ge 3$ there exists a finite connected graph $G$ and an initial configuration such that with positive probability all $s$ colours survive in the $s$-type urn process on $G$. 
\end{prop}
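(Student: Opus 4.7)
My plan is to prove the proposition by constructing, for each $s \ge 3$, an explicit finite connected graph $G$ and an initial configuration for which with positive probability all $s$ colours survive. A natural candidate is a graph that provides each colour with a large, well-separated reservoir: take $G$ to be a spider with $s$ legs, each a path of some length $L$, meeting at a common central vertex $v$, and place $N$ balls of colour $i$ at the leaf of the $i$-th leg for $i = 1, \ldots, s$, with $L$ and $N$ to be chosen suitably large.

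The intuition I would exploit is that within each leg the dynamics are essentially monochromatic and P\'olya-like: drawing a colour-$i$ ball at an interior vertex of leg $i$ adds a same-colour ball to each of its same-colour neighbours without annihilation, so the total mass $T_i(t)$ of colour $i$ grows. The only mechanism by which a colour can disappear is near $v$, where balls of different colours can meet. By symmetry, the fluxes reaching $v$ from the $s$ legs are roughly balanced, so the mass at or near $v$ remains bounded; each time $v$ is drawn it kills only a bounded number of balls of each other colour, and these losses are small compared with the within-leg growth. To make this precise I would define a good event $E$ asking that (a) the mass in a small neighbourhood of $v$ stays bounded by some constant $M$ for all time, and (b) no colour ever goes extinct, and then lower-bound $\Pr(E) > 0$ by coupling the urn process with simpler reference processes---for instance, $s$ independent P\'olya-type urns on the legs together with a bounded random walk at $v$---using martingale and concentration estimates to control the interaction term.

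The main obstacle, and the reason that the argument underlying Theorem~\ref{thm:urns} does not extend, is the control of the dynamics at the central vertex: although $v$ carries only a small fraction of the total mass, it can broadcast to all $s$ legs simultaneously, so a single bad run of draws from $v$ could cascade and wipe out one colour. Proving that $\Pr(E) > 0$ holds uniformly in time, rather than merely over a long but finite horizon, requires a delicate choice of parameters and a Borel--Cantelli or recursive argument to rule out such cascades over arbitrarily large time scales.
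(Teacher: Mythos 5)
The paper's construction is the \emph{Risk graph} $R_s$: $s$ triangles all sharing a single central vertex $v_0$, with $n$ balls of colour $i$ placed on each of the two non-central vertices of the $i$-th triangle. You propose a spider: $s$ paths of length $L$ meeting at $v$, with $N$ balls of colour $i$ at the \emph{leaf} of leg $i$. Both designs aim to exploit the same ``gang up on the leader'' drift at the centre ($s-1$ attackers versus $1$ supporter), so your idea is heuristically in the right family. But the two choices are not interchangeable, and the differences are exactly where the difficulty lies.

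First, in $R_s$ every periphery vertex is adjacent to a same-colour partner and starts loaded with $n$ balls, so the periphery-ball count is literally a three-colour P\'olya urn: each periphery nucleation deposits one same-colour ball at a periphery vertex, and nothing else changes in the periphery. That single observation is what makes the proportions $r_k(j)$ concentrate and makes it possible to show that the centre is a random walk with uniformly negative drift. In your spider, the per-step periphery increment is $1$ or $2$ depending on whether the nucleating vertex is an endpoint or an interior vertex of a leg, so the ``reference P\'olya urns on the legs'' you invoke are not an exact coupling, and you would need an extra argument to control the resulting position-dependent reinforcement. Second, by loading only the leaves, you start with the first vertex of every leg \emph{empty}, which is precisely the dangerous configuration: a single nucleation at $v$ colours that first vertex with the centre's colour and begins an invasion down the leg. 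The paper explicitly guards against this via the events $\Cc_k$ (no periphery vertex empty) and $\Dc_k$ (no colour's proportion too small), but with your initial condition the analogue of $\Cc_k$ fails at time $0$, so the whole bootstrap of ``no bad event ever happens'' cannot even get started without a separate argument for the initial filling-in phase. Third, and most importantly, your proof never supplies the quantitative ingredients that close the argument in the paper: the explicit bad events $\Ac_k,\Bc_k,\Cc_k,\Dc_k$, the random-walk deviation bounds, and the Azuma--Hoeffding estimate for $\Dc$. You correctly identify the danger (``a single bad run of draws from $v$ could cascade''), but you leave it as an acknowledged obstacle rather than resolving it; in the paper it is resolved precisely because $\Ac_k$ forces $A_k \le k^{1/6}n^{1/6}$ and the drift calculation shows such a run has probability decaying geometrically. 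As written, the proposal is a plausible plan with a weaker construction, not a proof: to make it work you would need to either load the whole leg (not just the leaf) with colour $i$, prove a substitute for the P\'olya-urn property on paths, and supply concrete concentration estimates, or simply switch to the Risk graph where all three issues evaporate.
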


The construction we will use to prove Proposition~\ref{prop:counterexample} is extremely simple: we take $G$ to be a collection of $s$ triangles all sharing a common vertex\footnote{We call this the ``Risk graph" because, as in the boardgame, the players who are behind invariably gang up on the leader, and as a result no-one ever wins.}, place one ball of colour $i$ on each vertex that is only in triangle $i$, and no balls on the central vertex, see Figure~\ref{fig:risk}.\medskip

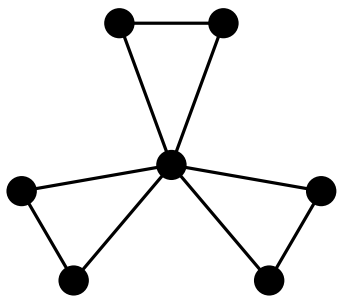
\begin{figure}[htbp]
\begin{center}
\begin{tikzpicture}[scale=1]
\def\m{3}
\def\r{2}
\def\Acngles{{70,110,190,230,310,350}} 
\foreach \i in{0,...,5}{
	\draw[very thick] (0,0) -- ({\r*cos(\Acngles[\i])},{\r*sin(\Acngles[\i])});
	}
\foreach \i in{0,...,2}{
	\pgfmathparse{2*\i+1} \let\ii\pgfmathresult
	\draw[very thick] ({\r*cos(\Acngles[2*\i])},{\r*sin(\Acngles[2*\i])}) -- ({\r*cos(\Acngles[\ii])},{\r*sin(\Acngles[\ii])});
	}
\foreach \i in{0,...,5}{
	\fill[fill=black] ({\r*cos(\Acngles[\i])},{\r*sin(\Acngles[\i])}) circle (0.2);
	}
\fill[fill=black] (0,0) circle (0.2); 
\end{tikzpicture}
\end{center}
\caption{The Risk graph on $s=3$ triangles.}
\label{fig:risk}
\end{figure}

In order to prove Conjecture~\ref{conj:rtype:growth}, however, it would be sufficient to prove an $s$-type analogue of Theorem~\ref{thm:urns} in the special case of cycles. We do not know how to prove this either, but strongly believe it to be true.

\begin{conj}\label{conj:rtype:urns}
For every $s,k \ge 3$, and every finite initial configuration, the $s$-type urn model on $C_k$, the cycle of length $k$, almost surely has only one surviving colour. 
\end{conj}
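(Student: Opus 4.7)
The natural plan is to induct on the number of colours $s$, with Theorem~\ref{thm:urns} supplying the base case $s=2$. The inductive step reduces to showing that, starting from any configuration on $C_k$ with $s\ge 3$ colours present, almost surely at least one colour becomes extinct in finite time; thereafter the process continues as an $(s-1)$-type urn scheme on $C_k$, and the inductive hypothesis applies.

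A key structural observation is that maximal monochromatic arcs of non-empty urns around $C_k$ cannot be created or split: any interior urn of an arc has both of its cycle-neighbours of the same colour, so only balls of that colour are ever added to it. Hence, if $m(t)$ denotes the number of arcs and $s(t)$ the number of colours present, both quantities are non-increasing in $t$ and almost surely stabilise at values $m^*$ and $s^*$. If $s^*=1$ we are done. If $s^*=2$, then from that time on we have a two-type urn process on $C_k$, and Theorem~\ref{thm:urns} forces $s^*=1$, a contradiction. So it suffices to rule out ``eternal coexistence'' with $s^*\ge 3$ and $m^*\ge 3$. Moreover, in any such persistent regime the cyclic sequence of colours $c_1,\ldots,c_{m^*}$ of the surviving arcs must satisfy $c_i\ne c_{i+1}$ and $c_{i-1}\ne c_{i+1}$, since otherwise the eventual vanishing of $A_i$ would trigger a merger and contradict persistence.

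To contradict eternal coexistence, I would focus on a single persistent arc $A_i$ flanked by arcs of two distinct colours and aim to show that its ball-count vanishes almost surely. Only balls chosen inside $A_i$ can enlarge $A_i$, whereas balls chosen near the boundary in either neighbouring arc can only erode it. A natural route is to construct a supermartingale from $|A_i|$ --- perhaps normalised by the total ball count in the three adjacent arcs, or weighted by an appropriate Perron--Frobenius eigenvector of the current configuration --- and invoke martingale convergence. The main obstacle, and the step I expect to be the hardest, is that $|A_i|$ is not literally a supermartingale: fluctuations in the neighbouring arcs can indirectly inflate it, so any candidate potential must be corrected using the global cyclic structure. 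One must therefore adapt, rather than invoke verbatim, the machinery behind Theorem~\ref{thm:urns}, running it simultaneously for all persistent arcs and combining the local estimates into a global contradiction. The difficulty is intrinsic, since the Risk graph counterexample shows that coexistence does occur once multi-way interactions are permitted --- the one-dimensional topology of $C_k$ must enter the proof in an essential way.
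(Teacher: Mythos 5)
This statement is Conjecture~\ref{conj:rtype:urns}, which the paper explicitly leaves open: the authors write ``We do not know how to prove this either, but strongly believe it to be true.'' There is therefore no proof in the paper against which to compare your proposal; what you have written is a programme, not a proof, and indeed you candidly say so yourself (``the step I expect to be the hardest,'' ``one must therefore adapt, rather than invoke verbatim'').

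On the substance of the programme: your structural observations are sound and useful. Interior urns of a maximal monochromatic arc have both cycle-neighbours of the same colour, so (since a nucleating ball is not removed, and annihilation requires a differently-coloured ball to arrive) those urns never shrink; hence arcs can neither split nor spawn new arcs, and the number of arcs and of present colours is non-increasing. This gives an honest reduction to ruling out a stable regime with $m^*\ge 3$ arcs. However, the next step is circular. You claim that in any persistent regime one must have $c_{i-1}\ne c_{i+1}$, ``since otherwise the eventual vanishing of $A_i$ would trigger a merger.'' But whether $A_i$ vanishes when $c_{i-1}=c_{i+1}$ is precisely the kind of statement the conjecture asks for, and it does not follow from Theorem~\ref{thm:urns}: that theorem applies to a two-type process on a whole cycle, not to a sub-arc coupled at its ends to the rest of a multi-coloured cycle. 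You cannot invoke it locally. Similarly, the proposed supermartingale for $|A_i|$ (or a normalised/Perron-weighted version of it) is only gestured at; as you note, a neighbouring arc can lose ground on its far side and thereby push balls toward $A_i$, so the naive quantity is not monotone in expectation, and no concrete correction is offered. The fundamental obstacle, which is exactly what blocks the authors, is that the conservative ``merge into purple'' device underlying the two-type proof does not extend cleanly to three or more colours, and your sketch does not supply a replacement for it. As it stands the proposal identifies the right reduction and the right difficulty but does not close the gap, and the statement remains open.
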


On the other hand, in order to prove an $s$-type analogue of Theorem~\ref{thm:growth} in the half-plane it is sufficient to prove an $s$-type analogue of Theorem~\ref{thm:urns} for paths, which follows easily from Theorem~\ref{thm:urns} by relabeling monochromatic stretches of colours in an alternating fashion, see Section~\ref{sec:proof}. This leads to the following corollary.

\begin{cor}\label{cor:half-plane}
For every $s\ge1$, and every finite initial configuration, the $s$-type growth model on $\Z^2$ restricted to a half-plane almost surely has only one surviving colour. 
\end{cor}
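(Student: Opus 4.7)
The plan is a two-step reduction to Theorem~\ref{thm:urns}. First, by the boundary-to-urn correspondence used in Section~\ref{sec:proof} to derive Theorem~\ref{thm:growth} from Theorem~\ref{thm:urns}, the $s$-type growth model on the half-plane reduces to an $s$-type urn process on a path. Since the coloured region is anchored to the half-plane boundary, its outer rectilinear boundary is a simple arc rather than a closed curve, and the associated graph of sides is a finite path $P$ rather than a cycle. It therefore suffices to prove the $s$-type analogue of Theorem~\ref{thm:urns} when the underlying graph is a path.

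For the second reduction, I would partition the urns of $P$ into maximal monochromatic runs $R_1, R_2, \ldots, R_m$ at time zero (so that adjacent runs have distinct original colours by maximality), and relabel every urn in $R_i$ with super-colour $R'$ when $i$ is odd and $B'$ when $i$ is even, viewing each ball as carrying this super-colour (inherited at creation from its parent ball). At time zero two adjacent urns differ in original colour if and only if they differ in super-colour, so the $s$-type annihilation rule agrees at that moment with the 2-type rule on the relabelled configuration. Theorem~\ref{thm:urns}, applied to the 2-type urn process on the finite connected path $P$, then gives that exactly one of $R'$ and $B'$ survives almost surely.

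The main obstacle is to transfer this conclusion back to the $s$-type process and to reduce the set of surviving original colours to exactly one. Beyond time zero the two processes can genuinely diverge; for instance, an urn that becomes empty and is later refilled from a non-adjacent initial run may acquire a super-colour out of sync with the local alternating pattern, causing adjacent urns to differ in original colour while agreeing in super-colour. So the coupling is not automatic. The argument must track super-colours at the level of individual balls, exploit the structural fact that monochromatic runs can only shrink, merge, or be consumed (but never split, since an interior urn of a run has both neighbours of its own colour and so cannot be annihilated), and combine Theorem~\ref{thm:urns} with an induction on the number of initial runs, each application of the 2-type theorem roughly halving the number of surviving runs until a single original colour remains. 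Making this inductive coupling precise---in particular, showing that at each iteration the surviving super-class really does constrain the surviving original colours in the $s$-type process---is the technical heart of the argument.
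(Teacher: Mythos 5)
Your two reductions match the paper's in outline: pass from the half-plane growth model to a boundary process whose sides correspond to an urn scheme on a path, and then relabel the maximal monochromatic runs alternately to force the process into the two-type framework of Theorem~\ref{thm:urns}. What you flag as the ``technical heart''---reconciling the relabelled $s$-type dynamics with a genuine two-type urn process after time zero---is, however, a real gap in your write-up, and the inductive-coupling route you sketch, with each application of Theorem~\ref{thm:urns} ``roughly halving'' the surviving runs, is not how to close it. A single application of the theorem is not guaranteed to kill half the runs, and tracking super-colours at the ball level across merges is unnecessary.

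The key point you are missing is that the divergence you worry about (a super-colour drifting out of register with the local alternating pattern) can only occur at the instant some monochromatic run is consumed, i.e.\ exactly when the number of runs strictly decreases. Since you already observed that this count is non-increasing (runs shrink, merge or vanish but never split), the whole corollary reduces to: whenever the boundary has $k\ge 2$ monochromatic intervals, this number a.s.\ decreases in finite time. One proves that by contradiction, which is precisely what the paper does. Suppose the boundary consists of exactly $k\ge 2$ intervals for all $t\ge\tau_0$. On that event the alternating relabelling remains in perfect register forever, so the side-length vector $\big(D_1(j),\dots,D_\ell(j)\big)_{j\in\N}$ evolves exactly as a two-type urn process on the path $P_\ell$ and has $k\ge 2$ alternating monochromatic runs for all time, hence never becomes monochromatic---contradicting Theorem~\ref{thm:urns}. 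Thus the number of intervals a.s.\ drops below $k$, and being integer-valued and non-increasing it a.s.\ reaches $1$. The contradiction hypothesis makes your coupling exact on precisely the event whose probability you are trying to show is zero, which is what makes the deduction short; no ball-level bookkeeping or halving induction is needed.
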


It is easy to see that a similar result holds in (for example) the quarter-plane.

\subsection{Growth in higher dimensions}\label{sec:dimensions}

Motivated by its applications to the study of the Ising model in $d$-dimensions, Cerf and Manzo~\cite{cerman13a,cerman13b} studied a model of nucleation and growth on $\Z^d$ in which the rate of infection increases rapidly with the number of already-infected neighbours. In this section we will discuss competition in a simpler version of their model in which the infection rates are either $0$, $1$ or $\infty$. To be precise, let us define the \emph{$r$-neighbour $s$-type growth model on $\Z^d$} as follows: not-yet-coloured sites are given colour $i$ at rate $0$ if they have no coloured neighbours, at rate $1$ if they have $j \in \{1,\ldots,r-1\}$ neighbours of colour $i$, and at rate $\infty$, if they have at least $r$ neighbours of colour $i$. We make the following conjecture.

\begin{conj}\label{conj:growth:highdim}
Let $d,r,s \ge 2$. The $r$-neighbour $s$-type growth model on $\Z^d$ almost surely has only one surviving colour for any finite initial configuration if and only if $r = 2$. 
\end{conj}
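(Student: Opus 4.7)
The conjecture has two directions. For $r \ge 3 \Rightarrow$ coexistence with positive probability, I would generalise the H\"aggstr\"om--Pemantle argument~\cite{hagpem98}. Place two initial singletons of different colours at distance $n$ apart. While the clusters remain far apart, they grow as independent Eden balls: a site with $\le r-1$ same-coloured neighbours is infected at rate $1$, which matches the Eden/first-passage percolation dynamics. When the two clusters meet, the interface is typically ``rough'' in the sense that boundary sites have only one or two same-coloured neighbours, so the instantaneous $r$-neighbour rule seldom fires on the interface and the dynamics there is again first-passage-like. The HP mass-transport/entropy argument should then carry over to show that, with positive probability, both colours advance linearly forever. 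The main obstacle is to control rare bulk events in which a long planar face of one colour becomes straight enough to sweep across the interface via a chain of $r$-neighbour completions; a coupling with ordinary FPP combined with large-deviation bounds on the proportion of interface sites that have $\ge r-1$ same-coloured neighbours should suffice.

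For $r = 2 \Rightarrow$ unique survivor, I would extend the urn reduction from Section~\ref{sec:growth}. Because any two same-coloured neighbours of an empty site colour it instantly, the coloured region has, at all times, no reentrant corners, and its boundary decomposes into axis-aligned monochromatic $(d-1)$-dimensional rectangular facets. Each single-site infection event translates one such facet by one unit along its normal, shortening the adjacent facets of opposite colour and enlarging those of the same colour. This is precisely an urn-type dynamic: place an urn at each facet, join two urns by an edge whenever the corresponding facets share a $(d-2)$-ridge, and record the $(d-1)$-volume of a facet as the number of balls in its urn. In $d=2$ the resulting graph is a cycle and Theorem~\ref{thm:urns} concludes the argument; in $d\ge 3$ the graph is a connected polyhedral complex whose combinatorial structure can change over time, as facets are born or disappear when a ridge shrinks to zero. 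The plan is to decompose the process into epochs between successive combinatorial changes of the facet graph, apply Theorem~\ref{thm:urns} (or its $s$-type analogue) on each fixed graph, and show that only finitely many such changes occur before one colour is eliminated.

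The main obstacle is twofold. First, for $s\ge 3$, the relevant urn statement is itself open (Conjectures~\ref{conj:rtype:urns} and~\ref{conj:rtype:growth}) and is even false on some graphs (Proposition~\ref{prop:counterexample}); any argument along the above lines must either deduce the growth result conditionally on those conjectures, or else exploit a special structure of the facet graphs arising from $r=2$ dynamics that rules out the Risk-graph obstruction. Second, in dimension $d\ge 3$ the combinatorial evolution of the facet graph (ridge collisions, facet births and extinctions) has no two-dimensional analogue, and a careful classification of the possible local moves is needed before the epoch decomposition can even be written down. I expect this second obstacle to be the principal barrier: already in the case $s=2$, $d=3$, where Theorem~\ref{thm:urns} is available off the shelf, verifying that the facet-urn coupling is well-behaved through ridge collisions is not immediate, which suggests that genuinely new combinatorial input beyond the urn reduction will be required in higher dimensions.
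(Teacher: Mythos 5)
This is Conjecture~\ref{conj:growth:highdim}; the paper offers no proof of either direction, only a heuristic in Section~\ref{sec:dimensions}, so any ``proof proposal'' can at most be compared against that heuristic. The paper's heuristic is a \emph{speed-of-growth dichotomy}: the sides of an $r$-neighbour droplet grow according to a process resembling $(r-1)$-neighbour bootstrap percolation in dimension $d-1$, so by Aizenman--Lebowitz a droplet of side-length $n$ advances each face at rate $\Theta\bigl((\log n)^{d-1}\bigr)$ when $d\ge r\ge 3$, but at rate $\Theta\bigl(n^{d-1}\bigr)$ when $r=2$. For $r\ge3$ the speed is only polylogarithmic in $n$, so a ``winning'' droplet does not pull away and the ratio of side-lengths should converge to $1$ (coexistence); for $r=2$ the speed is polynomial, so any small lead compounds and the leader eventually swallows the other colour.

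Your $r\ge3$ heuristic is genuinely different from the paper's and, more importantly, has a flaw. You claim that while clusters are apart they ``grow as independent Eden balls'' because the $r$-neighbour rule ``seldom fires.'' This is not so: the $r$-neighbour completions are exactly what drives a droplet of side $n$ at superconstant speed $(\log n)^{d-1}$, and they fire persistently (that is the content of Aizenman--Lebowitz). The dynamics is therefore not Eden/FPP, and a direct coupling with ordinary FPP plus ``roughness of the interface'' cannot be the whole story. Worse, your argument does not explain the dichotomy at $r=2$: a na\"ive Eden/HP coupling would predict coexistence there too, contradicting Theorem~\ref{thm:growth} in $d=2$. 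Any honest heuristic must use the transition from polylogarithmic to polynomial speed, which your sketch omits entirely.

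Your $r=2$ discussion is closer in spirit to what a proof should look like: the facet-urn reduction is the natural extension of Section~\ref{sec:proof}, and the two obstacles you identify are real. Indeed, the paper itself flags both the missing $s$-type urn result (Conjectures~\ref{conj:rtype:growth} and~\ref{conj:rtype:urns}) and the expectation of ``fairly daunting technical difficulties.'' One additional caveat worth recording: even for $s=2$, $d\ge3$, the $2$-neighbour-closed region need not have the clean alternating-facet structure you describe (a site with two same-coloured neighbours along perpendicular axes is filled instantly, producing richer boundary geometry than an axis-aligned brick), so the very first step of the reduction --- decomposing $\partial S_t$ into monochromatic facets joined along ridges --- already needs justification that has no $2d$ analogue.
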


To see why (we believe that) multiple colours should survive whenever $r \ge 3$, observe that the sides of a droplet in $\Z^d$ grow according to a process that (roughly speaking) resembles $(r-1)$-neighbour bootstrap percolation on $[n]^{d-1}$. By a classical theorem of Aizenman and Lebowitz~\cite{AL}, it follows that a droplet of sidelength $n$ will grow in each direction at speed at most  
$\Theta\big( \log n \big)^{d-1}$ as long as $d \ge r \ge 3$. 
Since this function grows only poly-logarithmically (rather than polynomially) with $n$, one would not expect the colour that is ``winning" at a given (sufficiently large) time to maintain its lead, in the sense that the ratio of side-lengths of the droplets should converge to $1$. 
On the other hand, if $r = 2$ then a droplet of side-length $n$ will grow in each direction at speed $\Theta\big( n^{d-1} \big)$, and as a result any colour that gains a slight advantage should increase her lead as time goes on, and eventually swallow each of the other colours. 
Despite this simple and natural heuristic, it seems likely that some fairly daunting technical difficulties would have to be overcome in order to give a rigorous proof.

\subsection{A word on our methods}

A variety of techniques have been developed over the years to study urn schemes and similar processes. These techniques include exchangeability, branching processes and stochastic approximation algorithms, see~\cite{pemantle07} for an overview. While each of these sets of techniques have their individual strengths, we have found that an approach based on branching process theory best fits our purposes. Indeed, for monochromatic initial configurations the classical theory of (multi-type) branching processes reveals a great deal of information about our urn scheme; we shall review the basics of this theory in Section~\ref{sec:monochromatic}.

For non-monochromatic initial configurations the situation is more complicated, due to the interaction (via annihilation) of balls of different colours, which depends on the spatial dependence induced by the underlying graph. In order to deal with this difficulty we shall define (see Section~\ref{sec:competing:urns}) a `conservative' system that encodes our two-type urn scheme. By studying this system we shall be able to prove Theorem~\ref{thm:urns}, and hence deduce Theorem~\ref{thm:growth} and Corollary~\ref{cor:half-plane} (see Section~\ref{sec:proof}). In Section~\ref{sec:counter-example} we will sketch a proof of coexistence on the Risk graph. Finally, in Section~\ref{sec:open}, we list several further open problems.

\section{The monochromatic urn scheme}\label{sec:monochromatic}

In this section we will prove a basic (but useful) lemma about the 1-type urn process on a graph $G$ with $n$ vertices. In order to simplify the analysis, we will embed the (discrete-time) urn process into continuous time by equipping each ball in the system with an independent Poisson clock; when a clock rings, the corresponding ball immediately sends a copy of itself along each edge of the graph incident to its vertex (we will call this a \emph{nucleation}), and each new ball obtains its own clock that is independent of all others. We will denote the initial configuration by $\x \in \N^n$, where $\N = \{0,1,2,\ldots\}$, and the configuration at time $t$ (starting from $\x$) by $Z_\x(t)$.


Since there is no interaction between particles in the monochromatic system, the resulting process $(Z_\x(t))_{t\ge0}$ is a multi-type continuous time Markov branching process with $n$ different types; see \cite[Chapter~III]{athney72} or \cite[Section~6.1]{jagers75}. Loosely speaking, a continuous time multi-type Markov branching process is a continuous time Markov process $\{ X(t) : t \ge 0 \}$ taking values in $\N^n$, for some $n \ge 1$, in which $X(t) = \big( X_1(t),\ldots,X_n(t) \big)$ is interpreted as the number of particles of $n$ different types, and each particle of type $i$ (independently of other particles in the system) has an exponentially distributed lifetime, at the end of which it produces a finite collection of new particles according to some distribution on $\N^n$.

In the case we are concerned with here, all types have the same (unit)
expected lifetime, at the end of which a particle of type $i$ produces a
copy of itself, along with a particle of each type $j$ for which $(i,j)$ is
an edge in $G$. The offspring distribution for this branching process is
encoded in the adjacency matrix of $G$. Let $\lambda(G)$ denote the
Perron--Frobenius eigenvalue of (the adjacency matrix of) $G$, and let
$\pi(G)$ be the corresponding eigenvector scaled so that $\|\pi(G)\|_2 =
1$. We recall from Perron--Frobenius theory that for any finite connected
graph $G$ the eigenvalue $\lambda(G)$ is real and (strictly) positive and
$\pi(G)$ has (strictly) positive entries.

From the classical work of Athreya~\cite{athreya68} we obtain the following result.

\begin{prop}\label{prop:onetype:urn}
Let $G$ be a finite, connected graph on $n$ vertices and let $\x \in \N^n$, $\x \not\equiv 0$. There exists a random variable $W = W(G,\x)$ such that $\Ex[W]=\x\cdot\pi$, $\Pr( W = 0 ) = 0$ and
$$
\lim_{t \to \infty} e^{-\lambda t} Z_\x(t) \, = \, W \pi 
$$
almost surely and in $L^2$, where $\lambda = \lambda(G)$ and $\pi = \pi(G)$.  
\end{prop}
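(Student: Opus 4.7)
The plan is to recognise $(Z_\x(t))_{t \ge 0}$ as a supercritical, positively regular, continuous time multi-type Markov branching process, and invoke the classical convergence theorem of Athreya~\cite{athreya68}. First I would identify the offspring structure: each particle of type $i$ has an exponential lifetime of mean $1$ and, upon expiring, is replaced deterministically by one particle of type $i$ together with one particle of each type $j$ adjacent to $i$ in $G$. The mean offspring matrix is therefore $M = I + A$, where $A = A(G)$ is the adjacency matrix of $G$, so the infinitesimal mean matrix $M - I$ is simply $A$. Connectedness of $G$ makes $A$ irreducible, and Perron--Frobenius theory supplies a simple, strictly positive dominant eigenvalue $\lambda = \lambda(G)$ with strictly positive unit eigenvector $\pi = \pi(G)$, so the branching process is positively regular and (as soon as $G$ has an edge) supercritical.

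Next I would apply Athreya's theorem. Since the offspring distribution is deterministic and bounded, all moment hypotheses (in particular, second moment and $L \log L$) are trivially satisfied, and Athreya's theorem produces a non-negative $W$ with $e^{-\lambda t} Z_\x(t) \to W\pi$ almost surely and in $L^2$. To read off $\E[W]$, I would use the elementary identity $\E[Z_\x(t)] = \x\,e^{At}$ (immediate from the backward equation $\tfrac{d}{dt}\E[Z_\x(t)] = \E[Z_\x(t)] \cdot A$); since $A$ is symmetric and $\pi$ is its unit eigenvector for $\lambda$, projection onto the $\pi$-direction yields $e^{-\lambda t}\E[Z_\x(t)] \to (\x\cdot\pi)\pi$, and $L^2$ convergence lets me exchange limit and expectation to obtain $\E[W]\pi = (\x\cdot\pi)\pi$, i.e.\ $\E[W] = \x\cdot\pi$.

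The substantive point is to rule out $W = 0$. The observation peculiar to our process is that every dying particle produces a copy of itself among its offspring, so the total population $|Z_\x(t)|$ is pathwise non-decreasing and extinction is impossible for any $\x \not\equiv 0$. Since Athreya's theorem identifies $\{W=0\}$ (up to a null event) with the extinction event of a positively regular supercritical CTMBP, this forces $\Pr(W = 0) = 0$. The degenerate case where $G$ is a single isolated vertex ($\lambda = 0$, $\pi = 1$, $Z_\x(t) \equiv \x$, $W = \x$) is absorbed trivially. I do not expect any serious obstacle here; the only care required is to reconcile the unit-$\ell^2$ normalisation of $\pi$ with the normalisation implicit in standard statements of Athreya's theorem, which amounts to the short projection calculation just indicated.
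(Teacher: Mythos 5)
Your proposal is correct, and for the existence of the limit and the computation of $\E[W]$ it follows essentially the same route as the paper: cite Athreya for a.s.\ and $L^2$ convergence, then use the fact that $e^{-\lambda t}Z_\x(t)\cdot\pi$ has constant mean (you phrase this via the backward equation $\E[Z_\x(t)] = \x\,e^{At}$ and projection, the paper phrases it as a martingale; these are the same computation) to obtain $\E[W] = \x\cdot\pi$.

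Where you diverge is in the proof that $\Pr(W=0)=0$. You invoke the general dichotomy from branching process theory: extinction is impossible here because every dying particle reproduces a copy of itself, and Athreya's results identify $\{W=0\}$ with the extinction event (the paper cites precisely this --- Athreya's Theorem~2, which gives absolute continuity of $W$ on $[0,\infty)$ when the zero state is unreachable). This is a perfectly valid proof and the paper explicitly acknowledges it as such. However, the paper deliberately declines to use it, and instead proves $\Pr(W=0)=0$ ``by hand'': it splits $Z_\x$ at the first nucleation time $\tau_1$ into the descendants $Y_\x$ of the newly created balls and the remainder $\hat Z_\x$, notes that the rescaled limits $W_Y$ and $\hat W$ of these two pieces are independent, that $\hat W \stackrel{d}{=} e^{-\lambda\tau_1}W$, and that $\E[W_Y]>0$; the product identity $\Pr(W=0)=\Pr(W=0)\Pr(W_Y=0)$ then forces $\Pr(W=0)=0$. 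The reason for this detour is not taste but necessity for what comes next: in Section~3 the two-colour annihilating process is \emph{not} a branching process, so the general theory is unavailable there, and the paper needs exactly this ``condition on the first nucleation, split off an independent piece, exploit a fixed-point / distributional identity'' template. Your argument buys brevity here but would leave you without the key tool for Theorem~\ref{thm:Z-limit}; the paper's argument is longer but reusable.

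One small caveat on your wording: the identification of $\{W=0\}$ with extinction in the supercritical positively regular case requires the Kesten--Stigum-type moment hypothesis, which you correctly note is trivially satisfied since the offspring law is bounded. Also, the isolated-vertex case you mention falls outside the paper's implicit standing assumption (the paper asserts $\lambda(G)>0$ for connected $G$), so it need not be handled; but including it does no harm.
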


\begin{proof}
The work of Athreya immediately implies the almost sure~\cite[Theorem~1]{athreya68} and $L^2$~\cite[Theorem~3]{athreya68} existence of the limit. The core of the argument is based on the fact that the family $\{e^{-\lambda t} Z_\x(t)\cdot\pi:t\ge0\}$ is a martingale, which implies in particular that 
\begin{equation}\label{eq:Wmean}
\Ex[W(G,\x)] \, = \, \lim_{t\to\infty}e^{-\lambda t} \, \Ex[Z_\x(t)\cdot\pi] \, = \, \Ex[Z_\x(0)\cdot\pi] \, = \, \x\cdot\pi \, > \, 0.
\end{equation}
The limit $W=W(G,\x)$ is further known to satisfy a variety of properties. For instance, for $W$ to be absolutely continuous on $[0,\infty)$, it is sufficient that $Z_\x(t)$ cannot attain the all zero state, see~\cite[Theorem~2]{athreya68}. However, instead of relying on the general theory, it will be instructive (for what is to come in Section~\ref{sec:competing:urns}) to prove that $\Pr( W = 0 ) = 0$ by hand.

To do so, let us fix an initial state $\x \not\equiv 0$, and consider the first nucleation to occur -- we will track the balls (and their descendants) created in this first nucleation separately. Indeed, let $Y_\x(t) \in \N^n$ denote the vector of these balls and their descendants at time $t$, and let $\hZ(t) = Z_\x(t) - Y_\x(t)$ denote the vector of the remaining balls. Note that $\hZ(t)$ evolves as a 1-type urn process on $G$, with initial state $\x$, but delayed by time $\tau_1$ (the random time of the first nucleation).

Athreya's result implies that $(\hZ(t))_{t\ge0}$ and $(Y_\x(t))_{t\ge0}$,
once rescaled by $e^{-\lambda t}$, converge almost surely. 
That is, if we denote their limits by $\hW$ and $W_Y$, then almost surely
$$
W\pi \, = \, \lim_{t \to \infty} e^{-\lambda t} Z_\x(t) \, = \, \lim_{t \to \infty} e^{-\lambda t} \big( \hZ(t) + Y_\x(t)\big) \, = \, \big( \hW + W_Y \big) \pi.
$$
Moreover, by~\eqref{eq:Wmean} we have $\Ex[W_Y] > 0$, and since $(\hZ(t +
\tau_1))_{t \ge 0}$ has the same distribution as $(Z_\x(t))_{t \ge 0}$, 
and is independent of $\tau_1$, 
it follows that $\hW$ equals $e^{-\lambda \tau_1}W$ in distribution.

We claim that 
\begin{equation}\label{eq:Wzero}
\Pr( W = 0 ) \, = \, \Pr( \hW = W_Y = 0 ) \, = \, \Pr( W = 0 ) \, \Pr( W_Y = 0 ).
\end{equation}
The first step follows since $W = \hW + W_Y$, and $\hW$ and $W_Y$ are both supported on $[0,\infty)$, while the second follows since the events $\{\hW = 0\}$ and $\{W_Y = 0\}$ are independent, and because $\Pr( \hW = 0 ) = \Pr( W = 0 )$. It follows that either $\Pr( W = 0 ) = 0$ or $\Pr( W_Y = 0 ) = 1$, but the latter is contradicted by the fact that $\Ex[W_Y] > 0$, so we are done.
\end{proof}

\section{The competing urn scheme}\label{sec:competing:urns}

In this section we will study the two-type urn process, embedded into
continuous time, as in the previous section. We will denote the initial
configuration by $\x \in \Z^n$, where $\x(v)$ is equal to the number of red
balls at vertex $v$ minus the number of blue balls at $v$, and the
configuration at time $t$ (starting from $\x$) by $Z_\x(t)$. The vector
$Z_\x(t)$ thus denotes the difference between the number of red and blue
balls in each urn at time $t$. 
Since each urn contains only balls of one colour, this means that the number
of balls at vertex $v$ is $|Z_\x(t)(v)|$, and that they are red [blue] if
$Z_\x(t)(v)>0$ [$<0$].

The main result of this section is the
following strengthening of Theorem~\ref{thm:urns}, which generalises
Proposition~\ref{prop:onetype:urn}. 

\begin{thm}\label{thm:Z-limit}
Let $G$ be a finite connected graph on $n$ vertices and let $\x \in \Z^n$, $\x\not\equiv0$. There exists a random variable $W = W(G,\x)$ such that $\Pr( W = 0 ) = 0$ and
$$
\lim_{t \to \infty} e^{-\lambda t} Z_\x(t) \, = \, W \pi 
$$
almost surely and in $L^2$, where $\lambda = \lambda(G)$ and $\pi = \pi(G)$.
\end{thm}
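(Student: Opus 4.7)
The strategy is to generalise the branching-process martingale argument of Proposition~\ref{prop:onetype:urn} to the signed setting $Z_\x(t) \in \Z^n$. The generator of $Z_\x$ is transparent: in state $z$, vertex $v$ fires at rate $|z(v)|$, incrementing $z(u)$ by $\sgn(z(v))$ for every neighbour $u \sim v$. A one-line computation then shows that, for every eigenvector $\ell$ of the adjacency matrix $A = A(G)$ with eigenvalue $\mu$, the process $e^{-\mu t} Z_\x(t) \cdot \ell$ is a martingale, since the infinitesimal drift of $Z \cdot \ell$ at $z$ equals
\[ \sum_v |z(v)|\,\sgn(z(v))\,(A\ell)(v) \;=\; \mu\,(z \cdot \ell). \]

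Specialising to $\ell = \pi$ gives the Perron martingale $M_t := e^{-\lambda t} Z_\x(t) \cdot \pi$, with predictable quadratic variation $d\langle M\rangle_t = \lambda^2 e^{-2\lambda t} \sum_v |Z_\x(t)(v)|\, \pi(v)^2\, dt$. Coupling $Z_\x$ with the monochromatic process $Z_{|\x|}$ of Proposition~\ref{prop:onetype:urn} through shared Poisson clocks (so that annihilation affects only the signed process) gives the pointwise bound $|Z_\x(t)(v)| \le Z_{|\x|}(t)(v)$, which together with $\Ex[Z_{|\x|}(t)(v)] = O(e^{\lambda t})$ yields $\Ex[\langle M\rangle_\infty] < \infty$; thus $M_t$ is $L^2$-bounded and converges almost surely and in $L^2$ to a limit $W$. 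Running the analogous martingale $e^{-\mu_i t} Z_\x(t) \cdot \ell_i$ for each eigenvector $\ell_i$ orthogonal to $\pi$, with eigenvalue $\mu_i < \lambda$, one checks that after rescaling by $e^{-\lambda t}$ the second moment tends to $0$; almost sure convergence along integer times then follows from Chebyshev and Borel--Cantelli, while oscillations between integer times are controlled by the monochromatic bound. Expanding $Z_\x(t)$ in an orthonormal eigenbasis delivers $e^{-\lambda t} Z_\x(t) \to W\pi$ almost surely and in $L^2$.

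The main obstacle is proving $\Pr(W = 0) = 0$. The proof of Proposition~\ref{prop:onetype:urn} decomposed $W$ at the first nucleation as a sum $\hat W + W_Y$ of two independent \emph{non-negative} contributions, and ruled out $W = 0$ using non-negativity together with $\Ex[W_Y] > 0$. In the signed case this breaks down: the descendants $\hat Z$ of the initial balls and the descendants $Y$ of the balls born at the first nucleation can annihilate against each other, so neither evolves as an autonomous signed urn scheme, and neither limit has a definite sign.

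To repair the argument I would introduce the ``conservative'' system announced in the overview, in which the two lineages are run side by side as \emph{independent} signed urn processes (the $Y$-lineage being monochromatic by construction, since it is born from a single nucleation). The key structural input is that annihilation preserves signed counts vertex by vertex, which should allow one to identify the $e^{-\lambda t}$ limit of the $\pi$-projection of the actual two-type process as $W \stackrel{d}{=} \hat W + W_Y$, with $\hat W$ and $W_Y$ independent, $\hat W \stackrel{d}{=} e^{-\lambda \tau_1} W$, and $W_Y$ the monochromatic limit supplied by Proposition~\ref{prop:onetype:urn}. Combined with an absolute-continuity argument for $W_Y$ (in the spirit of Athreya~\cite{athreya68}), this gives
\[ \Pr(W = 0) \;=\; \Ex\big[\Pr(W_Y = -\hat W \mid \hat W)\big] \;=\; 0. \]
The technical heart of the proof is verifying that the conservative coupling really does encode the limit $W$ of the two-type process, even though the two lineages interact through cross-annihilation in the actual dynamics; this is the place where the main new idea beyond the monochromatic argument must enter.
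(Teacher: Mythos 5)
Your martingale approach to existence is a legitimate alternative to the paper's route: the paper instead decomposes $Z_\x = (R_\x + P_\x) - (B_\x + P_\x)$ in the conservative (merge-to-purple) system and simply applies the monochromatic Proposition~\ref{prop:onetype:urn} to each bracket, which yields almost-sure and $L^2$ convergence of the full vector $e^{-\lambda t}Z_\x(t)$ in one stroke, without the separate treatment of non-Perron eigendirections, the eigenvalue-dependent second-moment estimates, Borel--Cantelli along integer times, or oscillation control that your plan would require. Both get there; the paper's is shorter and avoids the case analysis on $2\mu_i$ versus $\lambda$.

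The genuine gap is in the $\Pr(W=0)=0$ step. You propose to replicate the monochromatic argument by decomposing at the first nucleation and invoking \emph{independence} of $\hat W$ and $W_Y$ together with absolute continuity of $W_Y$, concluding $\Pr(W_Y = -\hat W)=0$. But this is exactly what fails in the signed setting, and the paper deliberately does \emph{not} claim independence. Even in the conservative (merged-purple) system with marks, the quantity the paper calls $Y_\x(t) = R_\x(t) - B_\x^\circ(t) + P_\x^*(t)$ (the ``two-type process restarted at $\tau_1$'') is not a function of the old-lineage balls alone: it includes the marked purple balls $P_\x^*$, which arise precisely when a red ball merges with a descendant of the first nucleation, so $W^Y$ and $W^M$ are built from overlapping randomness and are genuinely dependent. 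The paper's workaround is different in kind: it uses the almost-sure identity $W = W^Y - W^M$ together with $\Pr(W^M=0)=0$ to conclude that $\{W=0\}$ and $\{W^Y=0\}$ are disjoint, and combines this with $W \stackrel{d}{=} e^{-\lambda\tau_1}W^Y$ to get only the weak bound $\Pr(W=0)\le 1/2$. The crucial extra step you are missing is the bootstrap: applying this bound to $W(G,Z_\x(s))$ and letting $s\to\infty$ via L\'evy's $0$--$1$ law (so $\Pr(W=0\mid\Fc_s)\to \ind_{\{W=0\}}$ while staying $\le 1/2$) upgrades $\le 1/2$ to $=0$. Without that last idea, the independence-plus-continuity route you sketch does not close.
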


To see why Theorem~\ref{thm:urns} follows from Theorem~\ref{thm:Z-limit} it
suffices to recall that for any finite connected graph $G$ the eigenvector
$\pi(G)$ has strictly positive entries; 
thus the entries in $W\pi$ are either all positive or all negative.

The key idea in the proof is to consider a conservative system from which we will be able to read out the evolution of the annihilating system. This will be achieved as follows: when a red and a blue ball are supposed to annihilate one another, we will instead `merge' them to form a purple ball
[\tikz \fill[red, opacity=.8] (.9ex,.9ex) circle (.9ex);
+ \tikz \fill[blue, opacity=.4] (.9ex,.9ex) circle (.9ex);
= \tikz \fill[purple] (.9ex,.9ex) circle (.9ex);].
The purple balls will continue to evolve according to their own Poisson clocks, producing (purple) descendants, but purple balls do not interact with each other, or with any of the other balls in the system.

The harder part of the proof of Theorem~\ref{thm:Z-limit} will be to show that $\Pr( W = 0 ) = 0$. We begin with the easier part: showing that the limit exists.

\begin{lma}\label{lma:Z-limit}
Let $G$ be a finite connected graph on $n$ vertices and let $\x \in \Z^n$, $\x\not\equiv0$. There exists a random variable $W = W(G,\x)$ such that
$$
\lim_{t \to \infty} e^{-\lambda t} Z_\x(t) \, = \, W \pi
$$
almost surely and in $L^2$. 
\end{lma}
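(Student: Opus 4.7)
The plan is to implement the conservative red/blue/purple system described in the paragraph preceding the lemma, and to reduce the convergence statement to two applications of Proposition~\ref{prop:onetype:urn}.

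First I would set up the coupling between the two-type (annihilating) urn process and the conservative process carefully. The Poisson clocks attached to red and blue balls are shared by both systems, while the purple balls in the conservative system carry additional, independent clocks that have no counterpart in the annihilating system. A short induction on events then shows that the red and blue counts at each vertex coincide in the two systems at every time $t$: a red nucleation at an urn containing blue balls causes a decrement of $1$ in the blue count and no change in the red count in both systems (annihilation in one, merging in the other), and purple clock rings only alter the purple population. Writing $R(t)$, $B(t)$, $P(t)$ for the vectors of red, blue, and purple ball counts in the conservative system, this gives the identity
$$
Z_\x(t) \, = \, R(t) - B(t) \, = \, \big( R + P \big)(t) - \big( B + P \big)(t).
$$

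The key observation is then that each of $R + P$ and $B + P$ evolves as a 1-type urn process on $G$ in the sense of Section~\ref{sec:monochromatic}, with initial states $\x^+$ and $\x^-$ respectively (the positive and negative parts of $\x$). To check this for $R + P$, note that when a red or purple ball at a vertex $v$ rings, the ball at $v$ stays put and one unit is added to $R + P$ at each neighbour of $v$: if the new ball is red and merges with a blue, a purple is produced which is still counted by $R + P$. Nucleations of blue balls, on the other hand, do not affect $R + P$ anywhere (the new blue either sits alone in the urn, or it merges with an existing red to produce a purple, keeping $R + P$ constant). Hence the rates and offspring distribution of $R + P$ match those of the monochromatic urn process started from $\x^+$, and similarly for $B + P$.

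With the coupling and the key observation in hand the conclusion is immediate: Proposition~\ref{prop:onetype:urn} (applied to each non-degenerate process, together with the trivial case $\x^\pm \equiv 0$) yields random variables $W_R, W_B$ with $e^{-\lambda t}(R + P)(t) \to W_R \pi$ and $e^{-\lambda t}(B + P)(t) \to W_B \pi$ almost surely and in $L^2$, and subtracting these limits and setting $W := W_R - W_B$ gives the desired convergence. The only step that requires any real care is the coupling, where one must verify that the inert purple balls do not feed back into the red or blue dynamics and that merging produces exactly the same signed evolution as annihilation; I expect this to be the main, though modest, obstacle. Note that nothing in this plan establishes $\Pr(W = 0) = 0$, since $W_R$ and $W_B$ may in principle cancel, and this strict positivity must (as the authors indicate) be handled by a separate argument.
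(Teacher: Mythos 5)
Your proposal is correct and follows essentially the same route as the paper: it introduces the conservative red/blue/purple system, observes that $Z_\x = R - B = (R+P) - (B+P)$, identifies $R+P$ and $B+P$ as one-type urn processes started from $\x^+$ and $\x^-$, and then applies Proposition~\ref{prop:onetype:urn} twice and subtracts. The one place you are slightly more careful than the paper's wording is in arguing that $R+P$ \emph{is}, pathwise, a one-type urn process on the shared probability space rather than merely equal to one in distribution; this is indeed what the subtraction step requires, and it is what the paper means by ``each purple ball corresponds to a red ball that would have been present in the system if no blue balls had been initially present.''
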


\begin{proof}
If $\x \in \N^n$ then the conclusion follows from Proposition~\ref{prop:onetype:urn}, so we may assume that $W$ exists in the monochromatic setting. We will deduce the result in the general setting by considering three monochromatic systems, and applying this result twice.

To be precise, let $R_\x(t)$, $B_\x(t)$ and $P_\x(t)$ denote the vectors indicating the number of red, blue and purple balls at each site at time $t$ in the modified (conservative) process described above, in which red and blue balls are merged into purple rather than annihilated. The following observation is trivial. 


\begin{claim}
$Z_\x(t) = R_\x(t) - B_\x(t)$.
\end{claim}

\begin{proof}[Proof of claim] 
This follows simply because purple balls do not interact with either red or
blue balls, so the evolution of the red and blue balls  
is the same in both the original and conservative processes. 
\end{proof} 


We next apply Proposition~\ref{prop:onetype:urn} to the red and blue balls separately. 

\begin{claim}
There exist random variables $W^+$ and $W^-$ such that, 
almost surely and in $L^2$, we have
$$
\lim_{t \to \infty} e^{-\lambda t} \big( R_\x(t)+ P_\x(t) \big) \, = \, W^+ \pi \quad \text{and} \quad \lim_{t \to \infty} e^{-\lambda t} \big( B_\x(t)+ P_\x(t) \big) \, = \, W^- \pi.
$$
\end{claim}

\begin{proof}[Proof of claim] 
Note that each purple ball corresponds to a red ball that would have been present in the system if no blue balls had been initially present. In other words, $R_\x(t) + P_\x(t)$ has the same distribution as the number of red balls starting from the configuration $\x^+$, where $\x^+_i = \max\{x_i,0\}$. Since $\x^+$ is monochromatic, the first limit exists by Proposition~\ref{prop:onetype:urn}, and the second follows by an identical argument. 
\end{proof}


Note that the random variables $W^+$ and $W^-$ are not independent. Nevertheless, it follows from Claims~1 and~2 that
$$
\lim_{t \to \infty} e^{-\lambda t} Z_\x(t) \, = \, \lim_{t \to \infty} e^{-\lambda t} \Big( \big( R_\x(t)+ P_\x(t) \big) - \big( B_\x(t)+ P_\x(t) \big) \Big) \, = \, \big( W^+ - W^- \big) \pi,
$$
almost surely and in $L^2$, as required.
\end{proof}

For the rest of the section, let us write $W = W(G,\x)$ for the random variable given by Lemma~\ref{lma:Z-limit}. In order to complete the proof of Theorem~\ref{thm:Z-limit}, it remains to prove that $W = W^+ - W^-$ is almost surely nonzero. Note that, while it follows from Proposition~\ref{prop:onetype:urn} that $W^+$ and $W^-$ are both almost surely nonzero, since they are not independent we cannot use this fact directly to deduce the same for annihilating systems. 

Indeed, to prove that $W$ is almost surely nonzero we will again use the conservative urn scheme (in which red and blue balls are merged into purple rather than annihilated), but we will also add an additional mark to some of the balls (and their offspring). The red, blue and purple balls will interact as before and independently of the additional marks, while the marks will evolve as follows:
\begin{itemize}
\item[$(a)$] Let $b$ be the first ball to nucleate, and let $v$ be the corresponding vertex of $G$. We add a mark to each of the $d(v)$ new balls created in the first nucleation. 
\item[$(b)$] We will assume (without loss of generality) that $b$ is blue,
  in which case no red balls will ever be marked. 
(If $b$ is red, interchange red and blue below.)   
\item[$(c)$] When a marked ball nucleates, its offspring will also be marked.
\item[$(d)$] When a marked blue ball merges with a red ball, it produces a marked purple ball.
\item[$(e)$] Marks prefer blue to purple. Therefore, if an unmarked blue ball and a marked purple ball find themselves in the same urn, then the mark will immediately jump from (one of) the purple ball(s) to (one of) the blue ball(s).\footnote{Alternatively, but equivalently, red balls prefer to be paired with unmarked blue as opposed to marked blue balls, and thus regroup when they find themselves in the same urn as an unmarked blue ball. To put it another way, the marked balls can be thought of as `floating on top' of the other balls, sometimes being carried for a while by a red ball, but not influencing the configuration of unmarked balls. Due to exchangeability this description gives a process with the same law.}
\end{itemize}
As before, we will write $R_\x(t)$, $B_\x(t)$ and $P_\x(t)$ for the vectors indicating the number of red, blue and purple balls at each site at time $t$, starting from configuration $\x$. We will write $B^*_\x(t)$ (resp. $P^*_\x(t)$) for the vectors indicating the number of marked blue (resp. purple) balls, and $B^\circ_\x(t) = B_\x(t) - B^*_\x(t)$ (resp. $P^\circ_\x(t) = P_\x(t) - P^*_\x(t)$) for the unmarked balls. 



Together with Lemma~\ref{lma:Z-limit}, the following lemma completes the proof of Theorem~\ref{thm:Z-limit}. 

\begin{lma}
For every finite connected graph $G$ on $n$ vertices, and every $\x \in \Z^n$, $\x\not\equiv0$, 
$$\Pr\big( W(G,\x) = 0 \big) = 0.$$
\end{lma}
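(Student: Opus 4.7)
The strategy extends the proof of Proposition~\ref{prop:onetype:urn}, using the marked conservative system to produce an independent, continuously distributed contribution to $W$. Fix $\x\not\equiv 0$ and condition throughout on the first nucleation time $\tau_1$, its vertex $v_0$, and the colour of the nucleating ball $b$ (without loss of generality, blue).

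First, the total marked population $(M(t))_{t\ge\tau_1}$ is distributed as a monochromatic $1$-type urn on $G$ starting from $\mathbf{1}_{N(v_0)}$ at time $\tau_1$, and is independent of the unmarked balls. This follows because marks are produced only by nucleations of marked balls (rule~(c)), are preserved by mergers (rule~(d)), and are redistributed by the swap rule~(e) without changing the count $M(t)(v)$ at any vertex, while each ball's Poisson clock is independent of the rest. Applying Proposition~\ref{prop:onetype:urn} gives a random variable $W_M$ with $\Pr(W_M=0)=0$ and $\E[W_M]>0$ such that $e^{-\lambda(t-\tau_1)}M(t)\to W_M\pi$ almost surely; by \cite[Theorem~2]{athreya68}, $W_M$ is absolutely continuous on $[0,\infty)$.

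Second, let $\widetilde Z_\x$ denote the counterfactual process obtained by deleting the $d(v_0)$ offspring produced at time $\tau_1$, so that $\widetilde Z_\x(\tau_1)=\x$ and the process continues using only the clocks of the initial balls and their descendants \emph{other than} the first-nucleation descendants. By the strong Markov property and memorylessness, $(\widetilde Z_\x(\tau_1+s))_{s\ge 0}$ is distributed as the two-type urn process on $G$ started from $\x$; by Lemma~\ref{lma:Z-limit}, $e^{-\lambda t}\widetilde Z_\x(t)\to\widetilde W\pi$ almost surely, where $\widetilde W\stackrel{d}{=}e^{-\lambda\tau_1}W$. Since the clocks driving $\widetilde Z_\x$ are precisely those of the non-first-nucleation balls, and hence independent of the clocks driving $M$, the variables $\widetilde W$ and $W_M$ are independent conditional on~$\tau_1$.

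The decisive step is then to establish, in the coupling where both processes share clocks for the unmarked balls, the identity
\[
Z_\x(t)\;=\;\widetilde Z_\x(t)-M(t)\qquad\text{for all } t\ge\tau_1,
\]
which I would prove by induction on the event times (nucleation of a marked or unmarked ball, merger of a red ball with an unmarked blue or a marked blue, and the rule~(e) swap), checking vertex by vertex that each type of event preserves the identity. Passing to the limit gives $W=\widetilde W-W_M$. Since $W_M$ is absolutely continuous and independent of $\widetilde W$ conditional on $\tau_1$, we obtain
\[
\Pr(W=0)\;=\;\E\!\left[\Pr\!\left(\widetilde W=W_M\mid\tau_1\right)\right]\;=\;0.
\]
The case where $b$ is red is symmetric (giving $W=\widetilde W+W_M$). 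The main obstacle is the identity $Z_\x=\widetilde Z_\x-M$: a red ball consumed by a marked blue in the actual process survives in the counterfactual, so the coupling does \emph{not} give equal marginal red and blue counts, but only the signed combination stated; the conservative scheme with purple balls and the swap rule~(e) are engineered precisely so that the signed identity is preserved at every event.
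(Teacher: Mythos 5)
Your decomposition is the right one, and the identity $Z_\x(t) = Y_\x(t) - M_\x(t)$ (in your notation, $\widetilde Z_\x - M$) is indeed the engine of the paper's proof. But the claim that $\widetilde W$ and $W_M$ are conditionally independent given $\tau_1$ is false, and this is exactly the point where your argument and the paper's diverge.

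The problem is that the marked purple balls are counted on \emph{both} sides of your decomposition. A marked purple ball arises when a red ball merges with a marked blue; it contributes $+1$ to $Y_\x = R - B^\circ + P^*$ (because it plays the role of the red ball that, in your counterfactual, survives) and also $+1$ to $M_\x = B^* + P^*$. When such a ball nucleates, \emph{both} $Y_\x$ and $M_\x$ jump. Now consider the two possible couplings. If the marked purple ball inherits the clock of the absorbed red ball (the only way to make the counterfactual red ball fire simultaneously and thereby preserve $Z = \widetilde Z - M$ at every event, as you observe you must check), then that clock drives both $\widetilde Z_\x$ and $M_\x$, so they are not independent. If instead the purple ball carries a fresh clock (or the marked blue's clock), the two sets of clocks may be disjoint, but the identity $Z = \widetilde Z - M$ breaks: a marked purple nucleation increases $M$ and leaves $\widetilde Z$ and $Z$ unchanged, violating the identity. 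A two-vertex example with one red and one blue ball makes both failures explicit. The swap rule~(e) compounds the issue, since marks migrate onto balls from the ``unmarked'' lineage, so the set of clocks that drive $M_\x$ is not even fixed in time. Thus there is no coupling in which the identity and the independence both hold, and the step $\Pr(\widetilde W = W_M) = 0$ via absolute continuity of $W_M$ is not available.

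The paper's proof only asserts the distributional facts (that $Y_\x$ and $M_\x$ are marginally a two-type and a one-type urn, respectively) together with the pathwise identity $W = W^Y - W^M$; it never claims independence. From $\Pr(W^M = 0) = 0$ it follows that $\{W = 0\}\subseteq\{W^Y \neq 0\}$, and since $\Pr(W = 0) = \Pr(W^Y = 0)$ (because $W \stackrel{d}{=} e^{-\lambda\tau_1}W^Y$ with $e^{-\lambda\tau_1}>0$), this yields only $\Pr(W = 0) \leq 1/2$. The crucial additional step you are missing is then to observe that, by re-running the argument from any later time $s$, one has $\Pr(W=0\mid\Fc_s)\le 1/2$ almost surely for every $s$; L\'evy's $0$--$1$ law forces $\Pr(W=0\mid\Fc_s)\to\ind_{\{W=0\}}$, and the uniform bound by $1/2$ is incompatible with the indicator ever being $1$, so $\Pr(W=0)=0$. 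Your proposal needs to be repaired by replacing the independence-plus-absolute-continuity step with this bootstrapping argument.
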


\begin{proof}
Suppose first that the first ball $b$ that nucleates is blue. 
Recall that $Z_\x(t) = R_\x(t) - B_\x(t)$ denotes the difference between the number of red and blue balls present (marks ignored) at each vertex at time $t$, and let 
$$M_\x(t) \, := \, B^*_\x(t) + P^*_\x(t)$$ 
denote the vector indicating the number of marked balls (blue plus purple) at each vertex at time $t$. Moreover, define
$$Y_\x(t) \, := \, R_\x(t) - B^\circ_\x(t) + P_\x^*(t),$$
and observe that
\begin{equation}
  \label{zym}
Z_\x(t) \, = \, Y_\x(t) - M_\x(t).
\end{equation}

If the ball $b$ is red, we interchange red and blue and change signs on 
$Y_\x(t)$ and $M_\x(t)$ above; then \eqref{zym} still holds.

We claim that both $e^{-\lambda t} Y_\x(t)$ and $e^{-\lambda t} M_\x(t)$ converge. 

\begin{claim}
There exist random variables $W^Y$ and $W^M$ such that
$$
\lim_{t \to \infty} e^{-\lambda t} Y_\x(t) \, = \, W^Y \pi \quad \text{and} \quad \lim_{t \to \infty} e^{-\lambda t} M_\x(t) \, = \, W^M \pi
$$
almost surely.
Moreover, 
$$W(G,\x) \stackrel{d}{=} e^{-\lambda\tau_1} W^Y \quad \text{and} \quad \Pr\big( W^M = 0 \big) = 0,$$
where $\tau_1$ is the (random) time of the first nucleation. 
\end{claim}

\begin{proof}[Proof of claim] 
The existence and claimed properties of $W^M$ follow from
Proposition~\ref{prop:onetype:urn}, since 
if $b$ is blue,  
$M_\x(t)$ has the same distribution as a monochromatic system starting (at
the random time of the first nucleation) with one blue ball at each
neighbour of $v$, and if $b$ is red, the same holds for $-M_\x(t)$. 
(Indeed, neither step $(d)$ nor step $(e)$ in the
definition above affect the value of $M_\x(t)$.) 
For $W^Y$, they follow by
Lemma~\ref{lma:Z-limit}, since $Y_\x(t)$ has the same distribution as the
two-type process started with $\x$ at time $\tau_1$. (Indeed, 
assuming that $b$ is blue, 
the only
difference is that some of the red balls have merged with marked blue balls,
and are therefore counted in $Y_\x(t)$ as marked purple balls. Note that
when a mark jumps from a purple to  a blue ball, it does not affect the
value of $Y_\x(t)$.) 
\end{proof} 

It follows that
\be\label{eq:joint-limit}
W\pi\,=\,\lim_{t\to\infty}e^{-\lambda t} Z_\x(t) \, = \, \lim_{t \to \infty} e^{-\lambda t} \big( Y_\x(t) - M_\x(t)\big) \, = \, \big( W^Y - W^M \big) \pi.
\ee

Since the convergence in~\eqref{eq:joint-limit} is almost sure, and since $W^M \ne 0$ with probability one, regardless of the initial configuration, we conclude that both $W$ and $W^Y$ cannot be zero simultaneously. Since $W \stackrel{d}{=} e^{-\lambda\tau_1} W^Y$, it follows that
\be\label{eq:W-bound}
\Pr\big(W(G,\x)=0\big) \, \le \, 1/2
\ee
for every initial configuration $\x \not\equiv 0$.

We would like to strengthen this to conclude that $\Pr(W=0)=0$. Let $\Fc_s$ denote the $\sigma$-algebra encoding the evolution of the process $(Z_\x(t))_{t\ge0}$ up to time $s$ and let $B=\{W(G,\x)=0\}$. Since $W \pi$ is the point-wise limit of $e^{-\lambda t}Z_\x(t)$, the event $B$ is contained in $\Fc_\infty:=\sigma\big(\bigcup_{s\ge0}\Fc_s \big)$. The sequence $\big( \Pr(B \,|\, \Fc_s) \big)_{s\ge0}$ is a bounded martingale, and therefore convergent. Moreover, Levy's 0--1 law implies that, almost surely,
$$
\Pr(B\,|\,\Fc_s) \, \to \, \Pr(B\,|\,\Fc_\infty) \, = \, \ind_B
$$
as $s\to\infty$. However,~\eqref{eq:W-bound} shows that, almost surely,
$$
\Pr( B \, | \, \Fc_s ) \, = \, \Pr\big( W(G,\x) = 0 \, \big| \, Z_\x(s) \big) \, = \, \Pr\big( W(G,Z_\x(s)) = 0 \, \big| \, Z_\x(s) \big) \, \le \, 1/2,
$$
so the event $B$ can only occur on a set of measure zero, as required.
\end{proof}

\section{Deducing Theorem~\ref{thm:growth} from Theorem~\ref{thm:urns}}\label{sec:proof}

In this section we will deduce Theorem~\ref{thm:growth} and Corollary~\ref{cor:half-plane} from Theorem~\ref{thm:urns}, by applying it to a cycle and a path, respectively. Since both deductions are straightforward (and were already outlined in the Introduction), we will be somewhat brief with the details. 

\begin{proof}[Proof of Theorem~\ref{thm:growth}]
Let $R,B \subset \Z^2$ be finite sets of red and blue sites such that each $x \in \Z^2 \setminus (R \cup B)$ has at most one neighbour in each set. Let $A_t$ denote the (random) set of coloured vertices at time~$t$ in the two-type growth model on $\Z^2$ starting from this configuration, and let $S_t$ denote the coloured subset of $\R^2$ obtained by placing on each vertex $x \in A_t$ a unit square of the same colour as $x$. 

Let $\tau_0 \ge 0$ denote the random time at which $S_t$ becomes connected, and note that $\tau_0$ is almost surely finite. Unless one colour at this point already surrounds the other (in which case we are done), the outer boundary $\partial S_t$ of $S_t$ consists of $2k$ (for some $k \ge 1$) monochromatic intervals (i.e., connected subsets of $\partial S_t$) alternating between red and blue. (Since any inner boundaries are irrelevant to the long-term development of the process, we will ignore them.) Observe that the number of such monochromatic intervals can (deterministically) never increase; we will show that it decreases in finite time almost surely. Since this holds for every $k \ge 1$, it will be sufficient to prove the theorem.

To prove this, suppose (for a contradiction) that the outer boundary of the coloured set consists of $2k$ alternating monochromatic intervals for all $t \ge \tau_0$. At time $\tau_0$, select a point on the outer boundary that marks a break between a red and a blue interval, and accompany its movement during the evolution of the process. Let $\tau_1<\tau_2<\ldots$ denote the subsequent times of nucleation. At time $\tau_j$, for $j\ge0$, follow the outer boundary of the coloured component counter-clockwise, starting from the selected point, and stop when reaching the next corner or another intersection between red and blue, whichever comes first. If the stretch just followed was red and had length $\ell$, then set $D_1(j)=\ell$. If it was blue and had length $\ell$, then set $D_1(j)=-\ell$. Assume that $D_i(j)$ has been defined. If $D_i(j)$ ended at a corner, then repeat the previous procedure to define $D_{i+1}(j)$. If $D_i(j)$ ended before reaching the next corner, but because an intersection between red and blue was reached, then let $D_{i+1}(j)=0$ and repeat the previous procedure to define $D_{i+2}(j)$. We stop once we have returned to our starting point.

\begin{claim}\label{claim:growth:1}
If the outer boundary $\partial S_t$ of the coloured set at time $t = \tau_j$ consists of $2k$ alternating monochromatic intervals, then exactly $4k+4$ variables $D_i(j)$ will be defined.
\end{claim}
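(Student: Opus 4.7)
The plan is to count the number of $D_i(j)$ variables by analysing the geometric and combinatorial structure of $\partial S_{\tau_j}$, which we regard as a simple closed rectilinear curve built from horizontal and vertical unit segments. Let $VV$ and $CC$ denote the numbers of convex and concave corners of this curve, and let $MM$ denote the number of points where a colour change occurs strictly in the interior of a straight side (i.e., not at a corner). I will show that the total number of $D_i(j)$ variables equals $VV+CC+2MM$ and then evaluate this expression using two simple identities.

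Counting the variables is straightforward. Walking the boundary counter-clockwise, the points at which we cut are precisely the corners together with the mid-side colour transitions, giving $VV+CC+MM$ division points and hence $VV+CC+MM$ straight monochromatic arcs along the closed curve; each such arc contributes exactly one $D_i(j)$. By construction, an additional zero $D_{i+1}(j)=0$ is inserted whenever $D_i(j)$ ended at a colour transition that is \emph{not} a corner, which happens exactly once for each of the $MM$ mid-side transitions. (At the starting point, which is itself a colour transition, the insertion occurs at the end of the walk if the start is mid-side, and does not occur if it is at a concave corner; in either case the total number of inserted zeros equals $MM$.) Thus the total count of $D_i(j)$ is $VV+CC+2MM$.

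The main geometric input is the identification of concave corners with colour transitions. A lattice point $p$ is a concave corner of the outer boundary precisely when exactly three of the four unit squares meeting $p$ are coloured; the fourth square, at some vertex $v\in\Z^2$, then has two of its four $\Z^2$-neighbours among these three coloured squares (namely the two that share an edge with $v$). If those two neighbours had the same colour, the instantaneous two-neighbour infection rule would immediately colour $v$, destroying the corner; hence they must have opposite colours, so the two boundary segments incident to $p$ carry different colours and $p$ is a colour transition. A convex corner, by contrast, has only one coloured square at $p$, and therefore cannot host a colour transition. Consequently the $2k$ colour transitions split as $CC+MM=2k$. Together with the standard turning-angle identity $VV-CC=4$ for simple rectilinear polygons, this yields
\[
VV+CC+2MM \, = \, (CC+4)+CC+2(2k-CC) \, = \, 4k+4,
\]
as required. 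The main obstacle, and the only step that uses anything particular about the two-type growth model, is the observation that concave corners coincide with colour transitions; once this is in place, the remaining steps are purely combinatorial.
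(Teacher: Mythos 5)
Your proof is correct and takes essentially the same approach as the paper: both count one $D_i(j)$ per corner plus two per mid-side colour transition, and both rely on the facts that concave corners must be transitions, convex corners cannot be, and (in your notation) $VV-CC=4$, giving $4k+4$. The only difference is that you spell out two details the paper leaves implicit — a justification via the instantaneous two-neighbour rule that concave corners coincide with colour transitions, and the bookkeeping of the inserted zero at the starting point when it is a mid-side transition — which makes your write-up a touch more self-contained but does not change the argument.
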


\begin{proof}[Proof of claim]
At each time $t \ge \tau_0$, the outer boundary has $m$ inner corners and $m+4$ outer corners, for some $m \in \N$. A break between red and blue along the perimeter cannot occur at an outer corner, must occur at each inner corner, and can also occur along the sides. In particular, $m$ of the breaks occur at inner corners, and $2k - m$ of the breaks occur along line segments of the boundary. We obtain one variable for each corner (outer or inner), and two additional variables for each break along a side, and we therefore define exactly $2m + 4 + 2 (2k - m) = 4k + 4$ variables, as claimed.
\end{proof}

The claimed correspondence with the urn scheme on $C_{4k+4}$ now follows immediately. 

\begin{claim}\label{claim:growth:2}
If $\partial S_t$ consists of $2k$ alternating monochromatic intervals for every $t \ge \tau_0$, then $\big(D_1(j),\ldots,D_{4k+4}(j) \big)_{j \in \N}$ coincides with a two-type urn scheme on $C_{4k+4}$.
\end{claim}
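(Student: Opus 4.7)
My plan is to identify, under the hypothesis that $\partial S_t$ keeps $2k$ monochromatic intervals for every $t \ge \tau_0$, the jump chain $\big(D_1(j),\ldots,D_{4k+4}(j)\big)_{j\ge 0}$ with the two-type urn process on $C_{4k+4}$, by checking that both the probability of a given transition and the resulting change of state agree in the two processes. I would appeal to the strong Markov property at the jump times $\tau_j$ to reduce the claim to a statement about a single step.

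The first step is to describe what a nucleation event does to the $D_i$'s. Between consecutive jump times $\tau_j$ and $\tau_{j+1}$ the coloured region is \emph{stable} in the sense that no healthy cell has two or more same-colour neighbours, for otherwise the two-neighbour rule would already have coloured it. Thus the next event is the rate-$1$ colouring of a single healthy cell with exactly one coloured neighbour, followed by the instantaneous cascade driven by the two-neighbour rule. Because the cascade is monochromatic and spreads only through healthy cells that acquire a second same-colour neighbour, it fills precisely the unit-wide row parallel to the flat side containing the original infection, and terminates at the corners and breaks bounding that side. The net geometric effect is that the whole side shifts outward by one lattice unit.

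The matching now splits into two parts. For the selection probability, the number of coloured-to-healthy boundary edges contributed by side $i$ is exactly $|D_i(j)|$ (a zero $D_i$ contributes no edges, matching an empty urn), and since each edge fires at independent rate $1$, the probability that side $i$ is the next to shift is $|D_i(j)|/\sum_m|D_m(j)|$ --- precisely the probability that a uniformly chosen ball sits in urn $i$. For the update rule, when side $i$ shifts outward by one, $|D_i|$ is unchanged (the cascade reproduces a parallel side of the same length), while each neighbour $D_{i\pm 1}$ changes in absolute value by $+1$ if it shares the colour of $D_i$ (the corner is absorbed into a same-colour stretch), by $-1$ if it carries the opposite colour (the cascade eats one unit of the opposite-colour neighbour, possibly annihilating it to $0$), and flips from $0$ to $\pm 1$ with the colour of $D_i$ if $D_{i\pm 1}$ was previously a zero (a fresh unit of colour is created at the break). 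These cases are exactly the moves of the urn scheme on $C_{4k+4}$: add a ball of the same colour as $D_i$ to each of the two neighbouring urns, with one-for-one annihilation.

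The main obstacle is the geometric case analysis in the update step: one has to chase the cascade through the various corner-and-break configurations (convex versus concave corners, same- versus opposite-colour neighbours, and the presence of adjacent zero variables) and verify in each case that the cascade terminates cleanly at the two endpoints of side $i$ without spuriously triggering a secondary cascade elsewhere. The standing hypothesis --- that $2k$ monochromatic intervals persist for all $t\ge\tau_0$ --- is precisely what excludes the degenerate scenarios in which two same-colour intervals would merge across a shrinking opposite-colour one, and hence guarantees that the cyclic structure with exactly $4k+4$ variables is preserved throughout the evolution, so that the $D$-sequence really does take values in the state space of the urn process on $C_{4k+4}$.
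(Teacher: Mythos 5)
Your proposal is correct and takes essentially the same approach as the paper, whose entire proof of this claim is the single observation that when the nucleating site corresponds to $D_i(j)$, the neighbouring variables $D_{i-1}(j)$ and $D_{i+1}(j)$ increase or decrease by one depending on the sign of $D_i(j)$. Your version additionally verifies the transition probabilities (which the paper treats as implicit, having noted in the Introduction that the next coloured site is the endpoint of a uniformly chosen edge-boundary edge, so that side $i$ is chosen with probability proportional to $|D_i(j)|$) and spells out the cascade geometry and the corner/break case analysis that the paper leaves to the reader.
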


\begin{proof}[Proof of claim]
At each nucleation, if the nucleating site corresponds to $D_i(j)$, then $D_{i-1}(j)$ and $D_{i+1}(j)$ will either increase or decrease by one, depending on the sign of $D_i(j)$.
\end{proof}

By Theorem~\ref{thm:urns}, the two-type urn scheme on $C_{4k+4}$ will almost surely become monochromatic in finite time. However, we assumed that the outer boundary of the coloured set consists of $2k$ alternating monochromatic intervals for all $t \ge \tau_0$, which would imply that the urn scheme also has $2k$ alternating monochromatic intervals for all $t \ge \tau_0$. This contradiction completes the proof of Theorem~\ref{thm:growth}.
\end{proof}

The proof of Corollary~\ref{cor:half-plane} is almost identical to that of Theorem~\ref{thm:growth}, so we will be somewhat more brief with the details. 

\begin{proof}[Proof of Corollary~\ref{cor:half-plane}]
As in the proof of Theorem~\ref{thm:growth}, let $A_t$ denote the set of coloured vertices at time~$t$ in the $s$-type growth model on a half-plane starting from some given finite configuration, and let $S_t$ denote the coloured subset of $\R^2$ obtained by placing on each vertex $x \in A_t$ a unit square of the same colour as $x$. 

Observe that there almost surely exists a finite time $\tau_0$ after which $S_t$ is connected and touches the boundary of the half-plane. The outer boundary $\partial S_t$ of $S_t$ (which does not include the boundary of the half-plane) consists of $k$ monochromatic intervals (for some $k \ge 1$); the number of such monochromatic intervals can (deterministically) never increase, and we will show that if $k \ge 2$ then it decreases in finite time almost surely. Since this holds for every $k \ge 2$, it will be sufficient to prove the corollary.

To prove this, suppose (for a contradiction) that the outer boundary of the coloured set consists of $k \ge 2$ monochromatic intervals for all $t \ge \tau_0$. We relabel these intervals (alternatingly) `red' and `blue', regardless of their previous colour, and construct the random variables $D_i(j)$ as above, starting and ending at the two points of $\partial S_t$ on the boundary of the half-plane. By the proofs of Claims~\ref{claim:growth:1} and~\ref{claim:growth:2}, it follows that $\big(D_1(j),\ldots,D_\ell(j) \big)_{j \in \N}$ coincides with a two-type urn scheme on $P_\ell$, the path of length $\ell - 1$, for some $\ell \ge 2$.

Now, by Theorem~\ref{thm:urns}, the two-type urn scheme on $P_\ell$ will almost surely become monochromatic in finite time. However, we assumed that the outer boundary of the coloured set consists of $k \ge 2$ alternating monochromatic intervals for all $t \ge \tau_0$, which would imply that the urn scheme also has $k$ alternating monochromatic intervals for all $t \ge \tau_0$. This contradiction completes the proof of Corollary~\ref{cor:half-plane}.
\end{proof}

\section{A counter-example for multi-coloured urns}\label{sec:counter-example}


In this section we will sketch the proof of Proposition~\ref{prop:counterexample}, for simplicity in the case $s = 3$. We leave the (straightforward) generalisation to an arbitrary $s \ge 3$ to the reader.

Let $R_3$ denote the graph with 7 vertices consisting of three triangles sharing a vertex (see Figure~\ref{fig:risk}). We call the vertex of degree $6$ the \emph{centre vertex} and denote it $v_0$, and the remaining vertices we call \emph{periphery vertices}.  Observe that the periphery consists of three \emph{periphery pairs}.  We consider the initial configuration in which each periphery pair is assigned a distinct colour (red, blue or green), and each vertex of the pair is assigned $n$ balls of that colour. We will show that, as $n \to \infty$, the probability that one of the colours fails to survive forever tends to zero. 
(It follows that for any $n \ge 1$, the probability that all colours
survive is positive. Indeed, for any $N \in \N$, with positive probability we reach the state with $N$ balls at each peripheral vertex without any nucleation at $v_0$.)

The intuition behind the proof is very simple: we expect the periphery to contain a large, growing, and roughly equal number of balls of each colour, and the centre to contain relatively few balls at all times. While this holds, it is stable: the periphery balls will behave like a simple Polya Urn (with three colours), whereas the centre vertex will have a strong drift against whichever colour finds itself there at a given time, since it is being attacked by (roughly) twice as many balls as are supporting it. Our task is to bound the probability that we ever leave this `metastable state'. 

To do so, we make the following definitions. First, for each $k \ge 0$, let $A_k$ denote the number of balls at the centre after $k$ steps (so $A_0 = 0$), and define a `bad' event 
$$\Ac_k \, = \, \big\{ A_k > k^{1/6}n^{1/6} \big\}.$$
Next, again for each $k \ge 0$, let $B_k$ denote the number of balls in the periphery after $k$ steps (so $B_0 = 6n$), and define a second bad event 
$$\Bc_k \, = \, \big\{ B_k < 3n + k/2 \big\}.$$
Next, for each $k \ge 0$, each periphery vertex $v$, and each colour $j$, define $r_k(v)$ (resp. $r_k(j)$) to be the proportion of periphery balls that are at vertex $v$ (resp. of colour $j$) after $k$ steps. We define two more bad events as follows:
$$\Cc_k \, = \, \big\{ r_k(v) = 0 \textup{ for some } v \neq v_0 \big\} \quad \textup{and} \quad \Dc_k \, = \, \big\{ r_k(j) < f(k) \textup{ for some colour } j \big\},$$
where $f(k) = 1/3 - \alpha - \sum_{i = n}^{n+k-1} i^{-3/2}$ for some small
constant $\alpha > 0$ 
  (assuming that $n$ is so large that $f(k)>0$). 
Define also
$$\Ac \, :=\, \bigcup_{k\ge 1} \bigg( \Ac_k \cap \bigcap_{\ell = 0}^{k-1} \big( \Bc_\ell \cup \Cc_\ell \cup \Dc_\ell \big)^c \bigg),$$
and define $\Bc$, $\Cc$ and $\Dc$ similarly. Note that if none of these events occurs then all three colours survive; it will therefore suffice to prove that each has probability $o(1)$ as $n \to \infty$. 

We will bound the probability of the events $\Ac,\Bc$ and $\Cc$ in terms of deviations of biased random walks. Indeed, observe first that if $k$ is minimal such that $\Ac_k$ holds, then a majority of the last $k^{1/6}n^{1/6}$ balls sent to the centre must have been of the same colour. Note that $f(k) \ge 3/10$ for every $k \ge 0$ (if $\alpha > 0$ is sufficiently small and $n$ is sufficiently large), and so if $\Dc_\ell$ does not hold then $r_k(j) \le 1 - 2f(k) \le 2/5$ for each colour $j$. Thus, if $\Dc_\ell$ does not hold for any $\ell < k$, this event has probability at most
$$3 \cdot 2^{k^{1/6}n^{1/6}} \cdot \bigg( \frac{2}{5} \bigg)^{k^{1/6}n^{1/6}/2} \cdot \bigg( \frac{3}{5} \bigg)^{k^{1/6}n^{1/6}/2} \le 3 \cdot \bigg( \frac{24}{25} \bigg)^{k^{1/6}n^{1/6}/2}.$$ 
Summing over $k \ge 1$, it follows that the event $\Ac$ occurs with probability $o(1)$.

The proofs that $\Pr(\Bc)=o(1)$ and $\Pr(\Cc)=o(1)$ are similar, so we shall
be even briefer. If $k$ is minimal such that $\Bc_k$ holds, and $\Ac_\ell
\cup \Cc_\ell$ does not hold for any $\ell < k$, 
then, for each $v$, 
 the expected increase in $B_k$ in each step (conditional on what has happened so far) is greater than $3/4$. The probability that $B_k$ increases by at most $k/2 - 3n$ in the first $k$ steps is therefore exponentially small in $n + k$, and (summing over $k$) is $o(1)$ as $n \to \infty$. Similarly, if $k$ is minimal such that $\Cc_k$ holds, and $\Ac_\ell \cup \Bc_\ell \cup \Dc_\ell$ does not hold for any $\ell < k$, then the expected increase in $r_k(v) B_k$ in each step (conditional on what has happened so far) is again greater than $3/4$. The probability that $r_k(v) B_k$ decreases by $n$ in the first $k$ steps is therefore exponentially small in $n + k$, and so we are done as before. 

For the event $\Dc$, we will use the Azuma--Hoeffding inequality. Let us fix a colour $j$ and define a stopping time $T = \min\{ t : \Ac_t\cup \Bc_t\cup \Cc_t\cup \Dc_t \textup{ occurs} \}$. We will bound the probability that $r_T(j) < f(T)$. To do so, let us define a sub-martingale $(Y_k)_{k \ge 0}$ by setting 
$$Y_k \, := \, r_k(j) - f(k)$$
if  $k \le T$, and $Y_k := Y_{k-1}$ otherwise. (To see that this is a sub-martingale, observe that if $k < T$ then the number of periphery balls of colour $j$ after step $k+1$ is always at least $r_k(j) B_k - 2$, is $r_k(j) B_k + 1$ with probability $r_k(j) - O(A_k/B_k)$, and is $r_k(j) B_k$ with probability $1 - r_k(j) - O(A_k/B_k)$. Via a short calculation, it follows that
$$\Ex\big[ Y_{k+1} - Y_k \,|\, Z_\x(k) \big] \ge \frac{1}{(n + k)^{3/2}} - O\bigg( \frac{A_k}{B_k^2} \bigg) > 0$$
if $k < T$, and is zero otherwise.)

We will apply the following well-known variant of the Azuma--Hoeffding inequality (see, e.g.,~\cite[Eq.~(3.30)]{mcdiarmid98}): if $(Y_k)_{k\ge 0}$ is a sub-martingale with $|Y_{k+1} - Y_k| \le c_k$ almost surely for every $k \ge 0$, then
$$\Pr\Big( \min_{j \le k} Y_j \, < \, Y_0 - t \Big) \, \le \, \exp\left(\frac{-t^2}{2\sum_{k \ge 0} c_k^2} \right)$$
for every $t \ge 0$. Noting that $Y_0 = \alpha$ and $|Y_{k+1} - Y_k| \le 8/(n+k)$ for every $k \ge 0$. Since the event $\Dc_k$ implies that $Y_k < 0$, it follows that
 $$\Pr(\Dc) \, \le\, \exp\left(\frac{-\alpha^2}{2^7\sum_{i=1}^{\infty}(n + i)^{-2}}\right) \, \le \, e^{-\alpha^3 n} = o(1)$$
as $n \to \infty$, as claimed. This completes the proof of the proposition. 


\section{A few more open problems}\label{sec:open}

In the Introduction we stated several problems and conjectures about the models studied in this paper; we would like to finish the paper by mentioning a few more. We begin by asking on which graphs coexistence is possible in the $s$-type urn process. 

\begin{prob}
For each $s \ge 3$, determine the family of finite connected graphs $G$ with the following property: for any finite initial configuration, the $s$-type urn process on $G$ almost surely has only one surviving colour. 
\end{prob}

When coexistence is not possible (for example, in the two-type growth model on $\Z^2$, or the two-type urn process), it would also be interesting to understand the expected time until only one colour remains, and the probability that a given colour is the one that survives. (The former question was suggested to us by Yuval Peres.) 

\begin{prob}
For the two-type growth model on $\Z^2$, or the two-type urn model on a finite connected graph $G$, and  for a given (finite) initial configuration, determine the rate of decay of the probability that more than one type survives until time~$t$.
\end{prob}

A related question asks for the typical `shape' of the colour that does not survive. 

\begin{prob}
Consider the two-type growth model on $\Z^2$, started with a single red and a single blue site at distance $n$ apart. At the time of extinction, what can be said about the shape of the surrounded colour? 
\end{prob}

In particular, one might guess that there exists a 1-parameter family of `shapes' such that the surrounded region (once suitably rescaled) converges to a member of this family as $n \to \infty$. 

Finally, it is natural to consider the two-type growth model with different update rules. For example, given a finite collection $\Uc = \{ X_1,\ldots,X_m \}$ of finite subsets of $\Z^2 \setminus \{ \0 \}$, we can define the \emph{$\Uc$-bootstrap growth model on $\Z^2$} as follows: a not-yet-coloured site $v$ is given colour $i$ at rate $0$ if it has no coloured neighbours, at rate $1$ if it has at least one neighbour of colour $i$, and at rate $\infty$ if every element of the set $v + X$ is coloured $i$ for some $X \in \Uc$. For example, if $\Uc$ consists of the $2$-element subsets of the four nearest neighbours of the origin, then we recover the two-type growth model defined in Section~\ref{sec:growth}. These very general update rules were introduced recently (in the context of bootstrap percolation) by Bollob\'as, Smith and Uzzell~\cite{BSU}, and studied further in~\cite{BBPS,BDMS}. 

\begin{prob}\label{prob:Ugrowth}
For which update families $\Uc$ does the $\Uc$-bootstrap growth model on $\Z^2$ almost surely have only one surviving colour for any finite initial configuration?
\end{prob}

Of course, one could generalize further (for example, to higher dimensions and to multiple types), or replace the 1-neighbour rule for colouring at rate $1$ with a different update family. However, we expect Problem~\ref{prob:Ugrowth} to already be rather challenging. 



\newcommand{\noopsort}[1]{}\def\cprime{$'$}

\medskip

{\small
\noindent
{\sc Daniel Ahlberg\\
Instituto Nacional de Matem\'atica Pura e Aplicada\\
Estrada Dona Castorina 110, 22460-320 Rio de Janeiro, Brasil\\
Department of Mathematics, Uppsala University\\
SE-75106 Uppsala, Sweden}\\
\url{http://w3.impa.br/~ahlberg}\\
\texttt{ahlberg@impa.br}\\

\noindent
{\sc Simon Griffiths\\
Department of Statistics, University of Oxford\\
1 South Parks Road, OX1 3TG Oxford, United Kingdom}\\
\texttt{griffith@stats.ox.ac.uk}\\

\noindent
{\sc Svante Janson\\
Department of Mathematics, Uppsala University\\
SE-75106 Uppsala, Sweden}\\
\url{http://www2.math.uu.se/~svante}\\
\texttt{svante.janson@math.uu.se}\\

\noindent
{\sc Robert Morris\\
Instituto Nacional de Matem\'atica Pura e Aplicada\\
Estrada Dona Castorina 110, 22460-320 Rio de Janeiro, Brasil}\\
\url{http://w3.impa.br/~rob}\\
\texttt{rob@impa.br}
}

\end{document}